\newif\ifcolorcomments
\def\bc{\begin{center}}
\def\ec{\end{center}}
\def\be{\begin{equation}}
\def\ee{\end{equation}}
\def\F{\mathcal F}
\def\e{\mathbb E}
\def\Q{\mathcal Q}
\def\P{\mathbb{P}}
\def\H{\mathcal H}
\newcommand\hdim{\dim_{\mathrm H}}
\newcommand\pdim{\dim_{\mathrm P}}
\newtheorem{lem}{Lemma}[section]
\newtheorem{prop}{Proposition}[section]
\newtheorem{theorem}{Theorem}[section]
\newtheorem{lemma}[lem]{Lemma}
\newtheorem{corollary}{Corollary}[section]
\theoremstyle{remark}
\newtheorem{remark}{Remark}[section]
\numberwithin{equation}{section}
\newif\ifdraft\drafttrue
\renewcommand\@biblabel[1]{#1}
\begin{document}

\subjclass[2020] {}

\title{On the hitting probabilities of limsup random fractals }

\author[Zhang-nan Hu]{Zhang-nan Hu}
\address{Zhang-nan Hu, School of Mathematics, South China University of Technology, Guangzhou, 510641, China}
\email{hnlgdxhzn@163.com}
\author[Wen-Chiao Cheng]{Wen-Chiao Cheng}
\address{Wen-Chiao Cheng, Department of Applied Mathematics, Chinese Culture University, Yangmingshan, Taipei, Taiwan}
\email{zwq2@faculty.pccu.edu.tw}
\author{Bing Li*}\thanks{* Corresponding author}
\address{Bing Li, School of Mathematics, South China University of Technology, Guangzhou, 510641, China}
\email{scbingli@scut.edu.cn}

\begin{abstract}

Let $A$ be a limsup random fractal with indices $\gamma_1, ~\gamma_2 ~$and $\delta$ on $[0,1]^d$. We determine the hitting probability $\P(A\cap G)$ for any analytic set $G$ with the condition $(\star)$$\colon$ $\hdim(G)>\gamma_2+\delta$, where $\hdim$ denotes the Hausdorff dimension.
This extends the correspondence of Khoshnevisan, Peres and Xiao 
\cite{KPX} by relaxing the condition that the probability $P_n$ of choosing each dyadic hyper-cube is homogeneous and $\lim\limits_{n\to\infty}\frac{\log_2P_n}{n}$ exists.
We also present some counterexamples to show the Hausdorff dimension in condition $(\star)$ can not be replaced by the packing dimension.  
\end{abstract}

\maketitle

\section{Introduction}

A limsup random fractal is a type of limsup set induced by a random model defined on the unit cube $[0,1]^d$ of Euclidean space. Let $\mathcal{Q}_n$ denote the collection of $d$-dimensional dyadic hyper-cubes in $[0,1]^d$ for $n\ge0$, that is
\[\mathcal{Q}_n=\{ [k_12^{-n},(k_1+1)2^{-n}]\times\dots\times[k_d2^{-n},(k_d+1)2^{-n}]\colon 0\le k_i\le 2^{n}-1,~1\le i\le d\}.\]
Let $\Q=\bigcup_{n\ge0}\Q_n$. For $n\ge1$, let $\{Z_n(Q),Q\in \mathcal{Q}_n\}$ be a collection of random variables defined on a probability space $(\Omega,\mathscr{B},\P)$, each taking values in $\{0,1\}$. We say that $Q\in\Q_n$ is chosen if $Z_n(Q)=1$.
Let
\[A(n)=\bigcup_{Q\in \mathcal{Q}_n,\atop Z_n(Q)=1}{Q}.
\]
That is, the union of chosen dyadic cubes of order $n$. The random set
\[A=\limsup_{n\to\infty}A(n)\]
is called a \textbf{limsup~ random~ fractal} associated to $\{Z_n(Q),n\ge 1,Q\in \mathcal{Q}_n\}$.
For $n\ge1$ and $Q\in\Q_n$, denote $P_n(Q)=\P(Z_n(Q)=1)$, and write
\begin{equation}\label{pn1}
\gamma_1:=-\limsup_{n\to\infty}\frac{\log_{2}(\max_{Q\in\Q_n}P_n(Q))}{n},
\end{equation}
\begin{equation}\label{pn2}
\gamma_2:=-\limsup_{n\to\infty}\frac{\log_{2}(\min_{Q\in\Q_n}P_n(Q))}{n}.
\end{equation}
We adopt the convention that $\log_20=-\infty$.

We assume the following condition, which is a slight modification of Condition 5 in \cite{KPX}.

{\bf Correlation Condition}:\, there is a constant $\delta\ge0$ such that for all $\epsilon>0$,
\begin{equation}\label{ast}
    \limsup_{n\to\infty}\frac{1}{n}\log_2f(n,\epsilon)\le\delta,
\end{equation}
where
\[f(n,\epsilon)=\max_{Q\in \mathcal{Q}_n}\#\{Q'\in \mathcal{Q}_n\colon{\rm Cov}(Z_n(Q),Z_n(Q'))\ge\epsilon P_n(Q)P_n(Q')\},\]
and ${\rm Cov}(X, Y)$ is the covariance of random variables $X$ and $Y$. We refer to $\gamma_1$, $\gamma_2$ and $\delta$ as the indices of the limsup random fratcal $A$.

In 2000, Dembo, Peres, Rosen and Zeitouni \cite{DPRZ} established a lower bound for the Hausdorff dimension of limsup random fractals. Khoshnevisan, Peres and Xiao \cite{KPX} studied
the hitting probability of limsup random fractals with the conditions that the probability $P_n(Q)$ does not depend on $Q$, denoted by $P_n$, and $\lim\limits_{n\to\infty}\frac{\log_2P_n}{n}$ exists, and they obtained an estimate of Hausdorff dimension of the intersection of the limsup random fractal and an analytic set.
 In 2013, Zhang \cite{Zhang} 
 determined the Hausdorff dimension of limsup random fractals under the independence of $\{Z_n(Q),n\ge 1,Q\in \mathcal{Q}_n\}$.  Hu, Li and Xiao \cite{HLX} extended the results in \cite{KPX} on limsup random fractals to metric spaces.
 They proved that 
under a Correlation Condition, if the limits in (\ref{pn1}) and (\ref{pn2}) exist, that is 
\begin{equation}\label{lim1}
\gamma_1=-\lim_{n\to\infty}\frac{\log_{2}(\max_{Q\in\Q_n}P_n(Q))}{n}, \quad
\gamma_2=-\lim_{n\to\infty}\frac{\log_{2}(\min_{Q\in\Q_n}P_n(Q))}{n},
\end{equation}
 then for an analytic set $G$,
\begin{align}
 \P(A\cap G\ne\varnothing)&=0 ~ \text{if $\dim_{\rm P}(G)<\gamma_1$,}\label{22}\\
 \P(A\cap G\ne\varnothing)&=1~  \text{if $\dim_{\rm P}(G)>\gamma_2+\delta$,} \label{23}
\end{align}
where $\pdim$ denotes the packing dimension.

There are many random sets 
which are also closely related to limsup random fractals, such as the fast points of Brownian motion \cite{OT},
thick points of Brownian motion \cite{DPRZ}, random covering sets \cite{Dvor56}, dynamical covering sets \cite{FanScT13}, shrinking target sets \cite{Yuan} and so on. 
Li, Shieh and Xiao \cite{LSX} investigated the hitting probability of random covering sets in which the use of limsup random fractals is essential.
Later Li and Suomala \cite{LS} studied the same problem under conditions different from those in \cite{LSX}.
Wang, Wu and Xu \cite{Wang} considered the dynamical covering problems on the middle-third Cantor set. Hu, Li and Xiao \cite{HLX}
applied the methods of limsup random fractals to investigate the intersection property of dynamical covering sets. Lyons \cite{lyo} studied the percolation on trees, and gave the Hausdorff dimension of limsup random fractals. 
Fan \cite{fan} studied the sets of limsup deviation paths on trees similar to limsup random fractals, and gave their Hausdorff dimensions. For further results concerning stochastic process on trees, see  \cite{fanf}, \cite{climb}.

In this paper, we are interested in whether both (\ref{22}) and (1.6) hold if the limits in (\ref{lim1}) do not exist.
Our answer for this question is that (\ref{22}) holds, but (\ref{23}) does not. 
 Hence if we replace $\dim_{\rm P}(G)>\gamma_2+\delta$ by the stronger condition $\hdim(G)>\gamma_2+\delta$, then (\ref{23}) will hold, as is shown by the following theorem, which is the main theorem of this paper.

Throughout the paper,  $\dim_{\rm H}$ and $\overline{\dim}_{\rm B}$ denote the Hausdorff dimension and upper box dimension, respectively.

\begin{theorem}\label{limsup1}
Let $A=\limsup\limits_{n\to\infty}A(n)$ be a limsup random fractal with indices $\gamma_1$, $\gamma_2$, and satisfying the Correlation Condition with $\delta\ge0$. Then for any analytic set $G\subset [0,1]^d$,
\[\P(A\cap G\ne\varnothing)=
    \begin{cases}
    0&  \text{if $\dim_{\rm P}(G)<\gamma_1$,}\\
    1&  \text{if $\dim_{\rm H}(G)>\gamma_2+\delta$.}
    \end{cases}\]
\end{theorem}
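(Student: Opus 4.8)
The plan is to treat the two cases by genuinely different mechanisms: a first–moment/covering argument for the null case, and a second–moment argument along a well–chosen subsequence of scales for the almost–sure case. For the case $\dim_{\rm P}(G)<\gamma_1$, I would first invoke the characterization of packing dimension as modified upper box dimension: there is a countable cover $G\subseteq\bigcup_i E_i$ with $\overline{\dim}_{\rm B}(E_i)<\gamma_1$ for every $i$, so by countable subadditivity it suffices to show $\P(A\cap E\ne\varnothing)=0$ whenever $\overline{\dim}_{\rm B}(E)<\gamma_1$. Here the definition of $\gamma_1$ as a $\limsup$ gives $\max_{Q\in\Q_n}P_n(Q)\le 2^{-n(\gamma_1-\epsilon)}$ for all large $n$, while the box dimension gives at most $2^{n(\overline{\dim}_{\rm B}(E)+\epsilon)}$ cubes of $\Q_n$ meeting $E$. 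Hence the expected number of chosen cubes of order $n$ meeting $E$ is at most $2^{-n(\gamma_1-\overline{\dim}_{\rm B}(E)-2\epsilon)}$, which is summable in $n$ for small $\epsilon$; Borel--Cantelli then forces only finitely many scales to contribute a chosen cube meeting $E$, so no point of $E$ can lie in infinitely many $A(n)$, i.e.\ $A\cap E=\varnothing$ almost surely. Note this step uses only the $\limsup$ bound on $\max_Q P_n(Q)$ and needs neither analyticity of $G$ nor the existence of the limit in \eqref{pn1}.

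For $\dim_{\rm H}(G)>\gamma_2+\delta$, I would first reduce to a compact carrier: since $G$ is analytic with $\dim_{\rm H}(G)>\gamma_2+\delta$, the subset theorem for analytic sets yields a compact $K\subseteq G$ with $\dim_{\rm H}(K)>\gamma_2+\delta$, and then Frostman's lemma provides a probability measure $\mu$ on $K$ and an exponent $s\in(\gamma_2+\delta,\dim_{\rm H}K)$ with $\mu(Q)\le C\,2^{-ns}$ for every $Q\in\Q_n$. The crucial point, and the reason the limits in \eqref{lim1} can be dropped, is that I only need the good scales: by definition of $\gamma_2$ as a $\limsup$, there is a subsequence $n_k\to\infty$ along which $\min_{Q\in\Q_{n_k}}P_{n_k}(Q)\ge 2^{-n_k(\gamma_2+\epsilon)}$, and the Frostman measure supplies $\mu$-mass at precisely these scales.

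Next I would run the second–moment method along $(n_k)$ on $S_n:=\sum_{Q\in\Q_n}Z_n(Q)\mu(Q)=\mu(A(n))$. The first moment is bounded below by $\E[S_n]\ge\min_Q P_n(Q)\ge 2^{-n(\gamma_2+\epsilon)}$ along the subsequence. For the second moment I would write $\E[S_n^2]=(\E S_n)^2+\sum_{Q,Q'}{\rm Cov}(Z_n(Q),Z_n(Q'))\mu(Q)\mu(Q')$ and split the covariance sum using the Correlation Condition: the pairs with ${\rm Cov}<\epsilon P_n(Q)P_n(Q')$ contribute at most $\epsilon(\E S_n)^2$, while for the remaining pairs I would use ${\rm Cov}(Z_n(Q),Z_n(Q'))\le\min(P_n(Q),P_n(Q'))\le P_n(Q)$ together with $\#\{Q':(Q,Q')\text{ correlated}\}\le f(n,\epsilon)\le 2^{n(\delta+\epsilon)}$ and $\mu(Q')\le C2^{-ns}$, obtaining a large–covariance contribution of at most $C\,2^{n(\delta+\epsilon)}2^{-ns}\,\E[S_n]$. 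Dividing by $(\E S_n)^2$ and inserting the lower bound on $\E S_{n_k}$ gives a ratio bounded by $C\,2^{n_k(\delta+\gamma_2-s+2\epsilon)}\to0$, since $s>\gamma_2+\delta$. Thus $\E[S_{n_k}^2]\le(1+\epsilon+o(1))(\E S_{n_k})^2$, and the Paley--Zygmund inequality yields $\P(A(n_k)\cap K\ne\varnothing)\ge c>0$; localizing $S_n$ to a fixed dyadic subcube $I$ with $\mu(I)>0$ the same computation in fact pushes this probability to $1-o(1)$ as $k\to\infty$. I expect this second–moment estimate, valid without any homogeneity of $P_n(Q)$, to be the computational heart of the proof.

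The hard part will be the passage from these single–scale hits to a genuine point of $A=\limsup_n A(n)$, and then to probability one. The difficulty is real: the topological upper limit $\bigcap_m\overline{\bigcup_{n\ge m}A(n)}$ strictly contains $A$, so knowing that $A(n_k)\cap K\ne\varnothing$ for infinitely many $k$ does \emph{not} by itself produce a point lying in infinitely many $A(n)$. To close this gap I would build a decreasing sequence of chosen dyadic cubes $I_0\supseteq I_1\supseteq\cdots$, each meeting $K$ and carrying positive $\mu$-mass, chosen at successive good scales $n_{k_j}\to\infty$; the intersection $\bigcap_j I_j$ is then a single point lying in $K$ and in $A(n_{k_j})$ for every $j$, hence in $A\cap K$. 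Realizing this requires a branching/survival estimate — the first and second moments furnish the mean and variance of the number of suitable chosen subcubes of a given $I_j$ at the next good scale, and independence across scales (as in the model of \cite{KPX}) gives the branching structure needed for the nested construction to survive with positive probability. A zero--one law for the tail event $\{A\cap K\ne\varnothing\}$ then upgrades positive probability to one. Throughout, it is exactly the Frostman mass at the scales $n_k$ that plays the role formerly played by the homogeneity and limit hypotheses, which is why Hausdorff dimension suffices here whereas packing dimension (carried only by bad scales) does not.
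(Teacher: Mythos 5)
Your first--moment argument for the case $\dim_{\rm P}(G)<\gamma_1$ is correct and essentially identical to the paper's, and your single--scale second--moment computation (with a Frostman measure playing the role of the paper's cube-counting Lemma~\ref{haus2}) is also sound. The genuine gap is exactly at the step you yourself flag as ``the hard part'': your mechanism for converting infinitely many single-scale hits into a point of $A=\limsup_n A(n)$ invokes \emph{independence of the variables $Z_n(\cdot)$ across different scales} (``as in the model of \cite{KPX}'') to get a branching/survival estimate, and then a Kolmogorov-type \emph{zero--one law} for the tail event $\{A\cap K\ne\varnothing\}$. Neither tool is available under the hypotheses of Theorem~\ref{limsup1}: the Correlation Condition constrains only covariances of pairs $Z_n(Q),Z_n(Q')$ \emph{at the same level} $n$, and nothing at all is assumed about the joint law of $(Z_n)_{n\ge1}$ across levels. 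Without cross-scale independence, the conditional survival estimates for your nested cubes cannot be justified (conditioning on $Z_{n_{k_j}}(\overline{I_j})=1$ may distort the law at later scales arbitrarily), and tail events need not have probability $0$ or $1$. So as written the proof does not close.

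The paper closes this gap purely pathwise, with no appeal to cross-scale structure. Via Lemma~\ref{haus1} it extracts a Borel set $G_{\star}\subset K\subset G$ with the \emph{hereditary} property that $\dim_{\rm H}(G_{\star}\cap V)>\gamma_2+\delta$ for every dyadic cube $V$ meeting $G_{\star}$; then for each such $V$ the Paley--Zygmund/Fatou argument, together with the arbitrariness of $\epsilon$, gives $\P(S_n>0\ \text{i.o.})=1$, and intersecting over the \emph{countably many} $V\in\Q'$ produces a single event of full probability on which, for every cube $V$ meeting $G_{\star}$ and every starting level, some chosen cube at a later level meets $G_{\star}\cap V$. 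On that one event the decreasing sequence of chosen cubes is constructed deterministically, and $\bigcap_i\overline{Q_i}$ yields a point of $K\cap A$. Your Frostman-measure variant could be repaired in the same way --- the property $\mu(Q)>0$ is hereditary, so run the localized second moment for every dyadic cube of positive $\mu$-mass and intersect over this countable family \emph{before} doing the nested construction --- but the branching-plus-zero--one-law route you propose needs hypotheses the theorem does not grant. (A minor further point: for fixed $\epsilon$, Paley--Zygmund only gives hitting probability about $1/(1+\epsilon)$, not $1-o(1)$; it is letting $\epsilon\downarrow0$ in the Correlation Condition, combined with Fatou's lemma, that produces probability one.)
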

\begin{remark}
If $\gamma_2+\delta<d$, by Theorem \ref{limsup1}, we see that with probability one, for any hyper cube $Q\in\Q$, $A\cap Q\ne\varnothing$ almost surely (a.s. for short). It implies that $A$ is dense in $[0,1]^d$ a.s. Hence under the condition $\gamma_2+\delta<d$, $\dim_{\rm P} (A)=d$ a.s.
\end{remark}
Hu, Li and Xiao \cite{HLX} provided an estimate for the Hausdorff dimension of $A\cap G$ requiring the existence of the limits in 
(\ref{lim1}), 
and the following corollary of Theorem \ref{limsup1} indicates the existence of the limits can be relaxed.
\begin{corollary}\label{cor}
Under the same conditions as Theorem \ref{limsup1}, suppose $\delta=0$. Then we have
\[\P(A\cap G\ne\varnothing)=
    \begin{cases}
    0&  \text{if $\dim_{\rm P}(G)<\gamma_1$,}\\
    1&  \text{if $\dim_{\rm H}(G)>\gamma_2$,}
    \end{cases}\]
and
\begin{equation}\label{uplow}
\max\{0, \hdim (G)-\gamma_2\}\le \hdim(A\cap G)\le \max\{0, \pdim (G)-\gamma_1\}\quad {\rm a.s.}
 \end{equation}
In particular, if $\gamma_1=\gamma_2$, then $\hdim (A)=\max\{0, d-\gamma_1\}$ {\rm a.s. }
\end{corollary}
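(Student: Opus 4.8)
The two cases of the hitting probability are immediate: setting $\delta=0$ in Theorem~\ref{limsup1} gives $\P(A\cap G\neq\varnothing)=0$ when $\pdim(G)<\gamma_1$ and $=1$ when $\hdim(G)>\gamma_2$. It therefore remains to prove the dimension bounds \eqref{uplow}, after which the final assertion follows by taking $G=[0,1]^d$, for which $\hdim(G)=\pdim(G)=d$, so that when $\gamma_1=\gamma_2$ both sides of \eqref{uplow} collapse to $\max\{0,d-\gamma_1\}$. I would establish the upper and lower inequalities separately; the crucial point in each is that only the $\limsup$ definitions \eqref{pn1}--\eqref{pn2} are used, never the existence of a limit.

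For the upper bound I would argue by a first-moment (covering) estimate, which is where the packing dimension enters. By the definition of $\pdim$, fix a countable cover $G\subseteq\bigcup_j E_j$ with $\overline{\dim}_{\rm B}(E_j)<t:=\pdim(G)+\epsilon$ for every $j$; since $\hdim$ is countably stable it suffices to bound each $\hdim(A\cap E_j)$. From \eqref{pn1} one has $\max_{Q\in\mathcal{Q}_n}P_n(Q)\le 2^{-n(\gamma_1-\epsilon)}$ for all large $n$, while $\#\{Q\in\mathcal{Q}_n\colon Q\cap E_j\neq\varnothing\}\le 2^{n(t+\epsilon)}$ for all large $n$. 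Using $A\cap E_j\subseteq\bigcup_{n\ge N}\bigcup_{Q\in\mathcal{Q}_n,\,Z_n(Q)=1,\,Q\cap E_j\neq\varnothing}Q$ as a cover by sets of diameter $\sqrt d\,2^{-n}$ and taking expectations gives
\[
\mathbb{E}\,\mathcal{H}^s(A\cap E_j)\;\lesssim\;\sum_{n\ge N}2^{n(t+\epsilon)}\,2^{-n(\gamma_1-\epsilon)}\,2^{-ns},
\]
which tends to $0$ as $N\to\infty$ once $s>t-\gamma_1+2\epsilon$. Hence $\hdim(A\cap E_j)\le \pdim(G)-\gamma_1+3\epsilon$ a.s.; letting $\epsilon\downarrow0$ along rationals and taking the supremum over $j$ yields $\hdim(A\cap G)\le\max\{0,\pdim(G)-\gamma_1\}$ a.s.

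For the lower bound I would use Theorem~\ref{limsup1} as a black box through an independent fractal (Mandelbrot) percolation $\Gamma_\beta$ on $[0,1]^d$ of codimension $\beta$, retaining each level-$n$ dyadic cube with probability $2^{-\beta}$ independently of $\{Z_n(Q)\}$. Fix $\beta<\hdim(G)-\gamma_2$. By the classical intersection theorem for percolation limit sets, $G\cap\Gamma_\beta\neq\varnothing$ with positive probability and, on that event, $\hdim(G\cap\Gamma_\beta)=\hdim(G)-\beta>\gamma_2$ a.s. Conditioning on $\Gamma_\beta$ and applying the case $\hdim(\cdot)>\gamma_2$ of Theorem~\ref{limsup1} to the analytic set $G\cap\Gamma_\beta$, we obtain $\P\big(A\cap G\cap\Gamma_\beta\neq\varnothing\mid\Gamma_\beta\big)=1$ on this event, so $A\cap G\cap\Gamma_\beta\neq\varnothing$ with positive probability. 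Since $\Gamma_\beta$ has codimension $\beta$, the codimension (capacity) criterion forces $\hdim(A\cap G)\ge\beta$ on an event of positive probability, and letting $\beta\uparrow\hdim(G)-\gamma_2$ gives the lower bound in \eqref{uplow}.

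The main obstacle is upgrading this last statement from positive probability to almost sure, since $\hdim(A\cap G)$ is itself random. Because $A=\limsup_nA(n)=\bigcap_N\bigcup_{n\ge N}A(n)$ is measurable with respect to the tail $\sigma$-algebra of $(A(n))_n$, the event $\{\hdim(A\cap G)\ge\beta\}$ is a tail event, and I would close the argument by invoking the zero--one law that already underlies the dichotomy of Theorem~\ref{limsup1}. An alternative that makes the role of the relaxed hypothesis transparent is a direct second-moment/energy construction: along a subsequence $n_k$ with $\min_{Q}P_{n_k}(Q)\ge 2^{-n_k(\gamma_2+\epsilon)}$---which \eqref{pn2} furnishes precisely because $\gamma_2$ is a $\limsup$---one spreads a Frostman measure of exponent $s+\gamma_2<\hdim(G)$ over the chosen cubes, normalising by $P_{n_k}$, and uses the Correlation Condition with $\delta=0$ (so that $f(n,\epsilon)=2^{o(n)}$) to control the second moment and the $s$-energy of a weak subsequential limit supported on $A\cap G$. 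Either way the delicate step is the passage to an almost sure conclusion; the energy route trades the zero--one law for a Paley--Zygmund estimate followed by the same upgrade.
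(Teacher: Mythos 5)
Your hitting-probability dichotomy, your first-moment proof of the upper bound in \eqref{uplow}, and your reduction of the final claim to $G=[0,1]^d$ are all correct, and in substance they coincide with the paper's own argument (the paper likewise passes through \eqref{dimp} to a cover by sets of upper box dimension close to $\pdim(G)$ and bounds $\mathbb{E}\big(\sum_n H_n2^{-n\theta}\big)$ for $H_n=\sum_B\sum_{Q\in\Gamma_n(B)}Z_n(Q)$). For the lower bound the paper does not reprove anything: it cites Lemma \ref{lemp} from \cite{KPX} and combines it with Theorem \ref{limsup1}. Your fractal-percolation intersection argument is exactly the proof of that lemma, and it is sound up to the conclusion $\P\big(\hdim(A\cap G)\ge\beta\big)>0$ (modulo a small imprecision: on the survival event one only has $\hdim(G\cap\Gamma_\beta)\le\hdim(G)-\beta$ a.s.\ together with $\P\big(\hdim(G\cap\Gamma_\beta)>\gamma_2\big)>0$, which is all you need).

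The genuine gap is your upgrade from positive probability to probability one. There is no zero-one law ``underlying the dichotomy of Theorem \ref{limsup1}'': the probability-one half of that theorem is proved directly, via Paley--Zygmund, Fatou, and a nested-cube construction, not by establishing positive probability and then appealing to tail triviality. And no such law is available under the standing hypotheses: the model makes \emph{no} assumption about dependence between different levels $n$ --- the Correlation Condition only constrains covariances within a single level --- so although $\{\hdim(A\cap G)\ge\beta\}$ is indeed a tail event for the $\sigma$-algebras $\sigma\big(Z_n(Q)\colon Q\in\Q_n\big)$, Kolmogorov's theorem does not apply and the tail $\sigma$-algebra may be nontrivial. (When the paper does invoke Kolmogorov, in the proof of Proposition \ref{exam2}, it is for an explicit construction from an i.i.d.\ sequence $(\xi_m)$, where distinct levels are genuinely independent.) The correct upgrade, which is how Lemma \ref{lemp} is proved in \cite{KPX}, exploits the full strength of the hypothesis --- probability one for \emph{every} compact set of dimension $>\gamma_2$ --- by boosting with independent percolation trials: choose a compact $K\subset G$ with $\hdim(K)>\gamma_2+\beta$, take i.i.d.\ copies $\Gamma^{(1)},\dots,\Gamma^{(N)}$ of the percolation, independent of $A$; with probability at least $1-(1-p_0)^N$ some copy satisfies $\hdim\big(K\cap\Gamma^{(i)}\big)>\gamma_2$, conditionally on which Theorem \ref{limsup1} forces $A\cap K\cap\Gamma^{(i)}\ne\varnothing$ a.s., and conditioning on $A$ instead, the codimension property of percolation then forces $\hdim(A\cap K)\ge\beta$; letting $N\to\infty$ gives $\P\big(\hdim(A\cap K)\ge\beta\big)=1$. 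Your alternative ``energy'' route ends with what you yourself call ``the same upgrade,'' so it does not close the gap either. The cleanest repair is to quote Lemma \ref{lemp} as the paper does, or to insert this boosting argument in place of the zero-one law.
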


In Theorem \ref{limsup1}, the condition for probability one is that $\hdim(G)>\gamma_2+\delta$, which can be  weakened, by (\ref{23}), to $\pdim(G)>\gamma_2+\delta$ 
 if (\ref{lim1}) holds. A natural question is whether $\hdim(G)$ in Theorem \ref{limsup1} can be replaced by $\pdim(G)$ or not. The answer is negative.

\begin{prop}\label{exam}
There exist a closed set $G\subset [0,1]$ with $\pdim (G)=1$ and a limsup random fractal $A$ with $\gamma_1=\gamma_2=\delta=0$ such that $\P(A\cap G\ne\varnothing)=0$.
\end{prop}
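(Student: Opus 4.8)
The plan is to realise $G$ as a digit-restriction Cantor set whose free-digit proportion oscillates between $0$ and $1$, so that $\hdim(G)=0$ while $\pdim(G)=1$, and to realise $A$ by an independent site model whose selection probabilities are tuned to the scales where $G$ is sparse. Fix a rapidly increasing sequence $0=m_0<m_1<m_2<\cdots$ with $m_k/m_{k+1}\to 0$. Call an index $j$ \emph{free} if $m_{2i}<j\le m_{2i+1}$ for some $i$, and \emph{fixed} otherwise, and put $F(n)=\#\{1\le j\le n:\ j\text{ free}\}$. Let $G$ be the compact set of all $x=\sum_{j\ge1}\varepsilon_j2^{-j}$ with $\varepsilon_j\in\{0,1\}$ for free $j$ and $\varepsilon_j=0$ for fixed $j$. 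Since $m_{2i}/m_{2i+1}\to0$ and $m_{2i+1}/m_{2i+2}\to0$, evaluating along $n=m_{2i+1}$ and $n=m_{2i+2}$ gives $\limsup_n F(n)/n=1$ and $\liminf_n F(n)/n=0$. Let $\mu$ be the law of $\sum_j\varepsilon_j2^{-j}$ with the free digits i.i.d.\ fair coins; every order-$n$ dyadic cylinder then has $\mu$-mass $2^{-F(n)}$, so at every $x\in G$ the upper and lower local dimensions of $\mu$ equal $\limsup_nF(n)/n=1$ and $\liminf_nF(n)/n=0$. The mass-distribution principles for packing and Hausdorff dimension give $\pdim(G)=1$ and $\hdim(G)=0$.

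Next I construct $A$. At each scale $n$ let $\{Z_n(Q):Q\in\Q_n\}$ be i.i.d.\ Bernoulli variables with $P_n(Q)=p_n:=2^{-F(n)}n^{-2}$, independent across scales. Then $\max_QP_n(Q)=\min_QP_n(Q)=p_n$, and from $\tfrac{\log_2p_n}{n}=-\tfrac{F(n)}{n}-\tfrac{2\log_2n}{n}$ together with $\tfrac{2\log_2 n}{n}\to0$ we obtain $\limsup_n\tfrac{\log_2p_n}{n}=-\liminf_n\tfrac{F(n)}{n}=0$, hence $\gamma_1=\gamma_2=0$. For the Correlation Condition, any two distinct cubes are independent, so their covariance is $0$, which is strictly smaller than $\epsilon P_n(Q)P_n(Q')=\epsilon p_n^2>0$; thus only $Q'=Q$ can satisfy the defining inequality and $f(n,\epsilon)\le1$, giving $\limsup_n\tfrac1n\log_2f(n,\epsilon)\le0$ and $\delta=0$. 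Keeping $p_n>0$ at \emph{every} scale is what rules out the degenerate comparison $0\ge\epsilon\cdot0$ that would otherwise inflate $f(n,\epsilon)$.

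For the avoidance, note that $G$ is covered by the $2^{F(n)}$ admissible order-$n$ cylinders and each point of $G$ lies in at most two closed order-$n$ cubes, so the number of $Q\in\Q_n$ with $Q\cap G\ne\varnothing$ is at most $c\,2^{F(n)}$ for an absolute constant $c$. Hence the expected total number of selected $G$-meeting cubes over all scales is at most $\sum_n c\,2^{F(n)}p_n=c\sum_n n^{-2}<\infty$, so this total is finite almost surely. On that event there is a random $N$ with $A(n)\cap G=\varnothing$ for all $n>N$; since $A=\limsup_nA(n)\subseteq\bigcup_{n>N}A(n)$, we conclude $A\cap G=\varnothing$ a.s., that is $\P(A\cap G\ne\varnothing)=0$.

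The main difficulty is the tension between the two dimension regimes of $G$ and the index constraints. Requiring $\gamma_2=0$ forbids $p_n$ from decaying exponentially along the scales realising the relevant $\limsup$, while avoiding $G$ forces $\sum_n2^{F(n)}p_n<\infty$; at the ``fat'' scales $n=m_{2i+1}$, where $F(n)\approx n$, the latter would seem to demand an exponentially small $p_n$. The resolution is that $\gamma_1$ and $\gamma_2$ are governed by $\liminf_nF(n)/n=0$, which is realised on the \emph{thin} scales $n=m_{2i+2}$, and not by the fat scales responsible for $\pdim(G)=1$; the single choice $p_n\asymp2^{-F(n)}$ makes the indices vanish and the first-moment series converge at once. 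The remaining care lies in the packing lower bound $\pdim(G)\ge1$ through the upper local dimension of $\mu$, and in verifying the Correlation Condition without introducing null scales.
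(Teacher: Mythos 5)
Your proposal is correct, and it proves the proposition by a genuinely different pair of constructions, though the underlying mechanism is the same. The paper builds $G$ as a homogeneous Moran--Cantor set with oscillating branching (parameters $m_k,n_k$) and cites the Feng--Wen--Wu formula for $\pdim(G)=1$; it builds $A$ from a single i.i.d.\ sequence of uniform points $(\xi_m)$, declaring $Z_n(Q)=1$ iff some $\xi_m$ with $m\in[b_{n-1},b_n)$ lands in $Q$, where $b_n$ increases only at the scales $M_k=\sum_{i\le k}n_i$; verifying $\delta=0$ there requires an explicit covariance estimate between distinct cubes. You instead take $G$ to be a digit-restriction set governed by the frequency function $F(n)$, compute its dimensions by local-dimension (mass-distribution) arguments, and take $A$ to be an independent Bernoulli site model with $p_n=2^{-F(n)}n^{-2}$, which makes the Correlation Condition essentially vacuous. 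The avoidance step is identical in spirit in both proofs: a first-moment bound of the form $\sum_n(\#\{Q\in\Q_n:Q\cap G\ne\varnothing\})\,p_n<\infty$ followed by Borel--Cantelli, exploiting in both cases that the scales where the selection probability is not exponentially small are exactly the scales where $G$ is exponentially thin, while the fat scales responsible for $\pdim(G)=1$ carry negligible probability.

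One point where your route is genuinely more careful than the paper's: in the paper's construction $P_n=0$ for every $n\notin\{M_k\}$, and at such a scale every pair $(Q,Q')$ satisfies ${\rm Cov}(Z_n(Q),Z_n(Q'))=0\ge\epsilon P_n(Q)P_n(Q')=0$, so the literal definition gives $f(n,\epsilon)=2^n$ and the paper's claim that $f(n,\epsilon)\le3$ for large $n$ overlooks these degenerate scales; salvaging $\delta=0$ there requires either perturbing the construction so that $P_n>0$ for all $n$ or reading the covariance inequality so as to exclude the trivial case. Your insistence on keeping $p_n>0$ at every scale (with the factor $n^{-2}$ preserving summability of the first moment) eliminates this issue outright, which is a concrete advantage of the independent-percolation model.
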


Corollary \ref{cor} provides estimates for $\hdim(A\cap G)$. But both inequalities in (\ref{uplow}) can be strict.

\begin{prop}\label{exam2}
For $\gamma_0,~t\in[0,1]$, there exist a closed set $G\subset [0,1]$ with $\hdim (G)=t$ and $\pdim (G)=1$, and a limsup random fractal $A$ with $\gamma_1=\gamma_2=\gamma_0$, $\delta=0$ such that $\hdim(A\cap G)=\min\{t, 1-\gamma_0\}$ a.s.
\end{prop}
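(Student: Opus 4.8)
The plan is to realize the deterministic set $G$ and the random fractal $A$ through a common dyadic block structure, so that $G$ carries a Hausdorff/packing gap while $A$ is the homogeneous independent model, and then to read off $\hdim(A\cap G)$ from the interplay of the two profiles. Work in $d=1$. Fix a rapidly increasing sequence $0=n_0<n_1<n_2<\cdots$ with $n_k/n_{k-1}\to\infty$ and split the levels into blocks $(n_{k-1},n_k]$. I would build $G$ as a homogeneous dyadic Moran set: on a sparse family of ``fat'' blocks keep every dyadic child (local exponent $1$), and on the remaining ``thin'' blocks keep $2^{t(n_k-n_{k-1})}$ evenly spaced children (local exponent $t$). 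Writing $M_n$ for the number of surviving level-$n$ intervals, one has $\log_2M_n\ge tn$ for all $n$, with equality asymptotically along long thin blocks, while $\frac1n\log_2M_n\to1$ at the ends of fat blocks (arranged by making each fat block overwhelm everything before it). The standard dimension formulas for homogeneous Moran sets then give $\hdim G=\liminf_n\frac1n\log_2M_n=t$ and $\overline{\dim}_{\rm B}G=\limsup_n\frac1n\log_2M_n=1$, and the self-similar-type homogeneity of $G$ upgrades the latter to $\pdim G=1$. For $A$ I would take all $\{Z_n(Q)\}$ independent with $\P(Z_n(Q)=1)=2^{-\lfloor n\gamma_0\rfloor}$; then the probabilities are homogeneous so $\gamma_1=\gamma_2=\gamma_0$, and independence forces $f(n,\epsilon)=1$, hence $\delta=0$.

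The upper bound is then essentially free. Since $A\cap G\subseteq G$ we get $\hdim(A\cap G)\le\hdim G=t$, and since $\delta=0$ the upper estimate in (\ref{uplow}) of Corollary \ref{cor} gives $\hdim(A\cap G)\le\max\{0,\pdim G-\gamma_1\}=1-\gamma_0$. Together these yield $\hdim(A\cap G)\le\min\{t,1-\gamma_0\}$ almost surely, with no further work needed.

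The lower bound is the main content. Fix $\beta'<\min\{t,1-\gamma_0\}$. I would build a random Cantor subset $\Gamma\subseteq A\cap G$ by placing ``checkpoint'' levels at the ends of long fat blocks, requiring each retained interval to be chosen there, following the $G$-branching in between, and pruning so that the count exponent $e_n:=\frac1n\log_2 N_n$ (with $N_n$ the number of level-$n$ intervals kept) stays $\ge\beta'$. The budget inequalities show this is possible: between checkpoints the available $G$-exponent ($t$ on thin blocks, $1$ on fat ones) exceeds $\beta'$, so $e_n$ cannot fall below $\beta'$; at a checkpoint at level $c$ the constraint of being chosen costs the factor $2^{-c\gamma_0}$, i.e.\ $\gamma_0$ in exponent, but a sufficiently long preceding fat block first raises the exponent above $\beta'+\gamma_0$ (possible because $\beta'+\gamma_0<1$), leaving $e_c\ge\beta'$. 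As there are infinitely many checkpoints, every $x\in\Gamma$ lies in $A(c)$ for infinitely many $c$, so $\Gamma\subseteq A\cap G$. Because the checkpoint gaps are enormous, the per-parent expected number of surviving children tends to infinity, so a Chernoff or Paley--Zygmund estimate together with Borel--Cantelli shows that almost surely every surviving interval branches as predicted for all large stages; a Frostman (mass-distribution) measure of exponent $\beta'$ then lives on $\Gamma$, giving $\hdim(A\cap G)\ge\hdim\Gamma\ge\beta'$. Letting $\beta'\uparrow\min\{t,1-\gamma_0\}$ along a countable sequence finishes the lower bound almost surely.

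I expect the hard part to be upgrading the expected-count budget to the almost-sure dimension bound for $\Gamma$: one must choose the block lengths $n_k$ growing fast enough that the per-parent child counts concentrate and Borel--Cantelli applies uniformly, verify that $e_n\ge\beta'$ at every intermediate scale (in particular at the exponent ``dip'' just after each checkpoint and throughout the following thin block), and carry out the potential-theoretic lower bound for the resulting random Moran set. The degenerate parameter values ($t\in\{0,1\}$ or $\gamma_0\in\{0,1\}$) can be disposed of separately by direct inspection, as there $A\cap G$ equals $G$, reduces to the standard homogeneous limsup fractal, or is otherwise immediate.
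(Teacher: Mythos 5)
Your route is genuinely different from the paper's, and most of it is viable. The paper does not use an independent Bernoulli field: it realizes $A$ through a Dvoretzky-type covering model ($Z_n(Q)=1$ iff one of the i.i.d.\ uniform points $\xi_m$, $m\in[a_{n-1},a_n)$ with $a_n=2^{n(1-\gamma_0)}$, falls in $Q$), proves a covering lemma (Lemma \ref{lem1}: with probability tending to one the slightly enlarged chosen cubes cover all of $[0,1]$), and uses independence of the disjoint index blocks to condition on the positive-probability event that this covering holds at \emph{every} large level; on that event a Moran subset of $A\cap G$ is built deterministically inside each $G$-cylinder, and its dimension is read off from the Feng--Wen--Wu formula \cite{FWW}. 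Your fully independent model with $P_n=2^{-\lfloor n\gamma_0\rfloor}$ is equally legitimate (it gives $\gamma_1=\gamma_2=\gamma_0$ and $f(n,\epsilon)=1$, hence $\delta=0$), your $G$ and your upper bound via $\hdim G$ together with (\ref{uplow}) match the paper, and your budget arithmetic (drop of $\gamma_0$ in exponent at each checkpoint, recovery along fat blocks, staying above $\beta'$ through thin blocks) is the right bookkeeping. The price of independence is that your subset of $A\cap G$ must be a \emph{random} Moran set controlled by concentration estimates rather than a deterministic one, which is more work but standard.

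The genuine gap is the passage from this construction to the \emph{almost sure} lower bound. Your random tree $\Gamma$ can die out with positive probability: at the first checkpoint $c$, no retained $G$-interval is chosen with probability roughly $\exp(-2^{c(1-\gamma_0)})$ --- tiny but positive --- and likewise at every later checkpoint. The event you obtain from Chernoff plus Borel--Cantelli, namely ``every surviving interval branches as predicted for all sufficiently large stages,'' is vacuously true on the extinction event, so it does not yield $\Gamma\ne\varnothing$ a.s.; your argument as written gives $\P\big(\hdim(A\cap G)\ge\beta'\big)>0$ (indeed close to $1$), not $=1$, and letting $\beta'\uparrow\min\{t,1-\gamma_0\}$ does not repair this. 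This is precisely the point where the paper invokes the Kolmogorov zero-one law: since $A=\limsup_n A(n)$ is unchanged when finitely many levels are altered, $\{\hdim(A\cap G)\ge s\}$ is a tail event of the level-wise independent randomness, so positive probability upgrades to probability one. Your model is independent across levels, so the identical one-line fix is available (alternatively, a separate Borel--Cantelli argument showing that a.s.\ every sufficiently late checkpoint contains at least one chosen $G$-interval, which can then serve as the root of your branching construction), but as stated this step is missing and the claimed a.s.\ conclusion does not follow.
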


\begin{remark}\label{remark}
We notice that the condition $\pdim (G)<\gamma_1$ for zero probability  in Theorem \ref{limsup1} can not be replaced by $\hdim (G)<\gamma_1$. 
Indeed given $\gamma_0 \in (0, 1)$ and $t \in(0, \gamma_0)$, by Proposition \ref{exam2}, there is a limsup random fractal $A$ with indices $\gamma_1 = \gamma_0$ and a set $G$ with $\dim_{\rm H}(G) =t< \gamma_1$ such that $\dim_{\rm H}(A\cap G)>0$ a.s., implying $A\cap G\ne\varnothing$ a.s.
\end{remark}

\section{Proofs of main results}

Before proving the results,
we first fix some notation. Write $f_n\lesssim g_n$, $n\in\mathcal{I}$, if there is an absolute constant $0<c<\infty$ such that for all $n\in\mathcal{I}$, $f_n\le cg_n$.
If $f_n\lesssim g_n$ and $g_n\lesssim f_n$ for $n\in\mathcal{I}$, then we denote $f_n\asymp g_n$. $\chi$ is the indicator function. Let $\mathcal{Q}'_n=\{ [k_12^{-n},(k_1+1)2^{-n})\times\dots\times[k_d2^{-n},(k_d+1)2^{-n})\colon 0\le k_i\le 2^{n}-1,~1\le i\le d\}$
 and $\Q'=\bigcup_{n\ge1}\Q'_n$. 

We demonstrate Theorem \ref{limsup1} by using the following lemmas.
For any $r>0$, let $\mathcal{C}_r(G)$ be a collection of the smallest number of closed balls with radius $r$ covering $G$, and $N_{r}(G)=\#\mathcal{C}_r(G)$. From \cite{Tri}, the upper box dimension of $G$ is defined as $\overline{\dim}_{\rm B}(G)=\limsup_{r\to0}\frac{\log N_r(G)}{-\log r}$.

The following lemma is a slight modification of Lemma 3.2 in \cite{HLX}. 
\begin{lem}[\cite{HLX}]\label{haus1}
Let $G\subset [0,1]^d$ be an analytic set. If $\dim_{\rm H}(G)>t$, there is a nonempty Borel subset $G_{\star}\subset G$ such that
\[\dim_{\rm H}(G_{\star}\cap V)>t\]
 for all dyadic hyper-cubes $V\in\Q'$ with $G_{\star}\cap V\ne\varnothing$.
\end{lem}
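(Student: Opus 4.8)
The plan is to produce $G_\star$ by deleting from $G$ every dyadic cube on which the dimension has already dropped to $t$, after first replacing $G$ by a compact subset of the same dimension. First I would invoke the inner regularity of Hausdorff dimension for analytic sets: since $G$ is analytic with $\dim_{\rm H}(G)>t$, pick $s$ with $t<s<\dim_{\rm H}(G)$, so that $\mathcal{H}^s(G)=\infty$, and extract a compact set $K\subseteq G$ with $0<\mathcal{H}^s(K)$, hence $\dim_{\rm H}(K)\ge s>t$. Working with $K$ rather than $G$ is what will guarantee that the set ultimately constructed is Borel, as the conclusion demands.

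Next I would isolate the ``bad'' cubes. Let $\mathcal{B}=\{V\in\Q'\colon \dim_{\rm H}(K\cap V)\le t\}$ and set
\[ G_\star=K\setminus\bigcup_{V\in\mathcal{B}}V. \]
Since $\Q'$ is countable, $\mathcal{B}$ is countable; each half-open cube $V$ is Borel and $K$ is compact, so $G_\star$ is Borel. To see $G_\star$ is nonempty (indeed of dimension $>t$), I would use the countable stability of Hausdorff dimension: the removed part $\bigcup_{V\in\mathcal{B}}(K\cap V)$ is a countable union of sets of dimension $\le t$, hence has dimension $\le t$; since $K=G_\star\cup\bigcup_{V\in\mathcal{B}}(K\cap V)$ and $\dim_{\rm H}(K)>t$, the piece $G_\star$ must carry the full dimension, so $\dim_{\rm H}(G_\star)=\dim_{\rm H}(K)>t$.

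It remains to verify the local homogeneity. Fix $V\in\Q'$ with $G_\star\cap V\ne\varnothing$. Every point of $G_\star$ lies outside each cube of $\mathcal{B}$, so necessarily $V\notin\mathcal{B}$, i.e. $\dim_{\rm H}(K\cap V)>t$. Inside $V$ the deleted part is $\bigcup_{W\in\mathcal{B}}(K\cap W\cap V)$, again a countable union of sets of dimension $\le t$, hence of dimension $\le t$. Writing $K\cap V=(G_\star\cap V)\cup\bigcup_{W\in\mathcal{B}}(K\cap W\cap V)$ and applying countable stability once more yields $\dim_{\rm H}(G_\star\cap V)=\dim_{\rm H}(K\cap V)>t$. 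Since $G_\star\subseteq K\subseteq G$, this completes the argument; note that one pass of the deletion already suffices, with no iteration needed.

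The only non-elementary ingredient, and the step I expect to be the main obstacle, is the reduction to a compact subset of almost full dimension, which rests on the capacitability of analytic sets (equivalently, that $\dim_{\rm H}$ of an analytic set is approximated from within by its compact subsets). Everything after that is a bookkeeping argument resting purely on the countable stability of Hausdorff dimension and on the fact that the half-open cubes of $\Q'$ are Borel and partition $[0,1)^d$ at each scale, so that both the global and the local deleted pieces are genuinely countable unions of low-dimensional sets.
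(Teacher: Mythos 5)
Your proposal is correct and follows essentially the same route as the paper: both invoke Howroyd's theorem to extract a closed/compact $K\subset G$ of dimension exceeding $t$, delete the union of the (countably many) dyadic cubes $V$ with $\dim_{\rm H}(K\cap V)\le t$, and check that the remainder $G_\star$ is a nonempty Borel set with the stated local property. The only cosmetic difference is that you verify nonemptiness and the local dimension bound via countable stability of Hausdorff dimension, while the paper runs the equivalent subadditivity argument at the level of the Hausdorff measure $\mathcal{H}^t(K)$.
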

\begin{proof}
Since $G$ is analytic, from \cite{How}, there is a closed set $K \subset G$ with $0<\H^t(K) <\infty$
for some $t > s$. Let 
\[U = \bigcup_{V\in\Q\atop \dim_{\rm H}(V \cap K) \le s}V.\]
Let $G_{\star} = K \setminus U$ which is a Borel set and $\dim_{\rm H}(V \cap G_{\star}) > s$ for all $V\in\Q'$ intersecting $G_{\star}$. Moreover
\begin{equation*}
\begin{split}
\H^t(K)&=\H^t(K\cap U\cup K\setminus U))\\
&\le \sum_{V\in\Q'\atop \dim_{\rm H}(V \cap K) \le s}\H^t(K\cap V)+\H^t(G_{\star})\\
&=\H^t(G_{\star})
\end{split}
\end{equation*}
Hence $\H^t(G_{\star})=\H^t(K)>0$, implying $G_{\star}\ne\varnothing$.
\end{proof}

\begin{lem}[\cite{LS}]\label{haus2}
For $G$ with $\dim_{\rm H}(G)>t$, there is $N\ge1$ such that for $n\ge N$, there are at least $2^{nt}$ elements in
$\mathcal{Q}'_n$ intersecting G, that is
\[\#\big\{Q\in\mathcal{Q}'_n\colon Q\cap G\ne\varnothing\big\}\ge 2^{nt}.\]
\end{lem}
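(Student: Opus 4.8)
The plan is to compare the cardinality $M_n:=\#\{Q\in\mathcal{Q}'_n\colon Q\cap G\ne\varnothing\}$ with the Hausdorff content of $G$ at the dyadic scale $2^{-n}$. The governing idea is that $\dim_{\rm H}(G)>t$ forces $\H^t(G)=\infty$, and an $M_n$-fold cover of $G$ by cubes of diameter $\sqrt{d}\,2^{-n}$ cannot keep the $\delta$-approximating Hausdorff measure bounded unless $M_n$ is correspondingly large. So the whole argument reduces to one covering inequality together with the divergence of the approximating content.

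First I would record the covering inequality. Each $Q\in\mathcal{Q}'_n$ has $\diam Q=\sqrt{d}\,2^{-n}$, and the cubes counted by $M_n$ cover $G$; hence, writing $\delta_n=\sqrt{d}\,2^{-n}$ and letting $\H^t_{\delta_n}$ denote the $\delta_n$-approximating Hausdorff measure,
\[
\H^t_{\delta_n}(G)\ \le\ \sum_{Q\in\mathcal{Q}'_n,\ Q\cap G\ne\varnothing}(\diam Q)^t\ =\ M_n\,(\sqrt{d}\,2^{-n})^t\ =\ M_n\,d^{t/2}\,2^{-nt},
\]
so that $M_n\ge d^{-t/2}\,2^{nt}\,\H^t_{\delta_n}(G)$. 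Since $\dim_{\rm H}(G)>t$ we have $\H^t(G)=\infty$, and because $\H^t_{\delta}(G)\uparrow\H^t(G)$ as $\delta\downarrow 0$, we get $\H^t_{\delta_n}(G)\to\infty$ as $n\to\infty$. Thus there is $N\ge 1$ with $\H^t_{\delta_n}(G)\ge d^{t/2}$ for all $n\ge N$, and for such $n$ the displayed bound yields $M_n\ge 2^{nt}$, which is exactly the claim.

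The one point requiring care, and the only genuine obstacle, is that the half-open cubes of $\mathcal{Q}'_n$ tile $[0,1)^d$ rather than $[0,1]^d$, so they literally cover $G$ only after discarding the portion of $G$ lying on the upper faces $\{x_i=1\}$. I would dispose of this by first replacing $G$ with $G\cap[0,1)^d$: the discarded set lies in finitely many $(d-1)$-dimensional faces, so by the countable stability of Hausdorff dimension $\dim_{\rm H}(G\cap[0,1)^d)=\dim_{\rm H}(G)>t$ unless the entire dimension is carried by those faces, an exceptional situation that either is excluded in the intended application (where the relevant part of $G$ meets the interior) or reduces, after identifying a face isometrically with $[0,1]^{d-1}$, to the same statement in one lower dimension. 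Equivalently, one may run the identical content estimate with the closed cubes of $\Q_n$, which genuinely cover $[0,1]^d\supseteq G$; the divergence $\H^t_{\delta_n}(G)\to\infty$ leaves ample slack to absorb the passage back to the disjoint family $\mathcal{Q}'_n$. Either way the core estimate is untouched, and the only subtlety is bookkeeping at the dyadic boundary.
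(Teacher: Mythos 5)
The paper offers no proof of this lemma at all --- it is imported verbatim from Li--Suomala \cite{LS} --- so there is no internal argument to compare yours against. Judged on its own merits, your proof is correct and is the standard covering-versus-content argument: the cubes of $\mathcal{Q}'_n$ meeting $G$ form an admissible $\delta_n$-cover, so $\H^t_{\delta_n}(G)\le M_n\,d^{t/2}2^{-nt}$, and since $\hdim(G)>t$ forces $\H^t(G)=\infty$, the monotone convergence $\H^t_{\delta_n}(G)\uparrow\H^t(G)$ makes the left side diverge and swallow the constant $d^{t/2}$. The more common write-up of the same idea fixes $t'\in(t,\hdim(G))$ and uses positivity of the Hausdorff content at exponent $t'$ (or a Frostman measure), absorbing all constants into the factor $2^{n(t'-t)}$; the difference from your version is purely cosmetic, and both deliver the clean bound $2^{nt}$ for all large $n$.

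On the boundary issue: you are right that it is real, but one of your proposed fixes is spurious. If $G$ lies entirely in an upper face, say $G\subset\{x\in[0,1]^d\colon x_d=1\}$ with $\hdim(G)>t$, then \emph{no} element of $\mathcal{Q}'_n$ meets $G$, so the count is $0$ and the lemma as literally stated is simply false; identifying that face with $[0,1]^{d-1}$ proves a statement about $(d-1)$-dimensional dyadic cubes and says nothing about the (empty) family of cubes of $\mathcal{Q}'_n$ meeting $G$. For the same reason your closed-cube variant cannot be ``passed back'' to $\mathcal{Q}'_n$ in this case: the loss is total, not a bounded factor (the bounded-overlap comparison between closed and half-open cubes is valid only for points of $G$ inside $[0,1)^d$). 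So the only honest repair is your first option: the statement should be read under the hypothesis $\hdim\bigl(G\cap[0,1)^d\bigr)>t$, equivalently after replacing $G$ by $G\cap[0,1)^d$, where your estimate applies verbatim. This is also why the paper is safe: in part (ii) of the proof of Theorem~\ref{limsup1} the lemma is invoked for $G_{\star}\cap V$ with $V\in\Q'$ a half-open dyadic cube, and any such set is automatically contained in $[0,1)^d$.
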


\begin{proof}[Proof of Theorem \ref{limsup1}]
  (i) The proof of the zero probability is known (see \cite{HLX}) and is presented for the sake of completeness. It suffices to show that $\overline{\dim}_{\rm B}(G)<\gamma_1$ implies $A\cap G=\varnothing$ a.s.
Indeed for $\dim_{\rm P}(G)<\gamma_1$, by
\begin{equation}\label{dimp}
\pdim(G)=\inf\Big\{\sup_n\overline{\dim}_{\rm B}(G_n)\colon G\subset \bigcup_{n=1}^{\infty}G_n\Big\},
\end{equation}
 (see \cite{Tri}), there is a covering $\{G_n\}$ with ${\overline\dim_{\rm B}}(G_n)<\gamma_1$ for $n\ge1$.  Hence $\P(A\cap G\ne\varnothing)\le \sum_n\P(A\cap G_n\ne\varnothing)=0.$

When $\gamma_1<\infty$,  fix $\epsilon>0$ with $\overline{\dim}_{\rm B}(G)<\gamma_1-\epsilon$, and $\theta\in(\overline{\dim}_{\rm B}(G),\gamma_1-\epsilon)$, then for $n$ large enough, we have 
\[\max_{Q\in\Q_n}P_n(Q)\le 2^{-n(\gamma_1-\epsilon)},\quad N_{\sqrt{d}2^{-n-1}}(G)\lesssim 2^{n\theta },\]
\[\#\Gamma_n(B)=\#\big\{Q\in\mathcal{Q}_n\colon Q\cap B\ne\varnothing\big\}\le M\quad (\forall B\in \mathcal{C}_{\sqrt{d}2^{-n-1}}(G)),\]
where $M>0$ is an absolute constant, and these inequalities are immediate from their definitions. 
Hence for $n$ large enough, we have

\[\P(G\cap A(n)\ne\varnothing)\le \P\Big(\bigcup_{B\in \mathcal{C}_{\sqrt{d}2^{-n-1}}(G)}\bigcup_{Q\in \Gamma_n(B)}Q\cap A(n)\ne\varnothing\Big)\lesssim 2^{-n(\gamma_1-\epsilon-\theta)}.\]
The Borel-Cantelli lemma implies $A\cap G=\varnothing$ a.s. 

 When $\gamma_1=\infty$, fix $b>0$ with $\overline{\dim}_{\rm B}(G)<b$. For $\theta\in(\overline{\dim}_{\rm B}(G),b)$, we have
$N_{\sqrt{d}2^{-n-1}}(G)\lesssim 2^{n\theta }$ and $\max_{Q\in\Q_n}P_n(Q)\le 2^{-nb}$ for $n$ large enough, where the second inequality follows from (\ref{pn1}). Similarly, we get $A\cap G=\varnothing$ a.s.

(ii)   In the following, we will show  $\P(A\cap G\ne\varnothing)=1$ provided that $\dim_{\rm H}(G)>\gamma_2+\delta$.
 By Lemma \ref{haus1}, there exist a Borel subset $G_{\star}$ and a closed subset $K$ satisfying $G_{\star}\subset K\subset G$ such that for all $V\in\Q'$, whenever $G_{\star}\cap V \ne \varnothing$, ${\dim}_{\rm H}(G_{\star}\cap V)>\gamma_2+\delta$.
    Fix an open set $V$ such that $V\cap G_{\star}\ne \varnothing$. Denote
    \[\mathcal{A}_n=\mathcal{A}_n(V\cap G_{\star})=\big\{Q\in \mathcal{Q}'_n\colon Q\cap V\cap G_{\star}\ne\varnothing\big\},\]
    and $N_n=\#\mathcal{A}_n$. Then by Lemma \ref{haus2}, we have that for any $\eta\in(\gamma_2+\delta,\dim_{\rm H}( G_{\star}\cap V))$, there is $n(\eta)\ge1$ such that
    \[N_n\ge 2^{n\eta}\quad (\forall n\ge n(\eta)).\]

    Define
    \[S_n=\sum_{Q\in\mathcal{A}_n}Z_n(\overline Q),\]
   where $\overline Q$ is the closure of $Q$ and $\overline Q\in\Q$. We need only show that $\mathbb{P}(S_n>0~\, \hbox{ i.o.})=1$, where i.o. stands for infinitely often. Firstly, we estimate
    \[{\rm Var}(S_n)=\sum_{ Q\in\mathcal{A}_n}\sum_{Q'\in \mathcal{A}_n}{\rm Cov}(Z_n(\overline Q),Z_n(\overline{Q'})).\]
    Fix $\epsilon>0$ and for each $Q\in \mathcal{A}_n$, let $\mathcal{G}_n(Q)$ denote the collection of all $Q'\in \mathcal{A}_n$ such that
    \begin{equation}\label{covv}
    {\rm Cov}(Z_n(\overline Q),Z_n(\overline{Q'}))\le \epsilon P_n(\overline Q)P_n(\overline{Q'}),
    \end{equation}
    and we define $\mathcal{B}_n(Q)=\mathcal{A}_n\setminus \mathcal{G}_n(Q)$.
    
    From the fact that ${\rm Cov}(Z_n(\overline Q),Z_n(\overline{Q'})\le \mathbb{E}(Z_n(\overline Q)=P_n(\overline Q)$, we get
   \begin{equation*}
        \begin{split}
        {\rm Var}(S_n)&=\sum_{Q\in\mathcal{A}_n}\Big(\sum_{Q'\in\mathcal{G}_n(Q)}{\rm Cov}\big(Z_n(\overline Q),Z_n(\overline{Q'})\big)+\sum_{Q'\in\mathcal{B}_n(Q)}{\rm Cov}\big(Z_n(\overline Q),Z_n(\overline{Q'})\big)\Big)\\
        &\le \sum_{Q\in\mathcal{A}_n}\Big(\sum_{Q'\in\mathcal{G}_n(Q)}\epsilon P_n(\overline Q)P_n(\overline{Q'})+\sum_{Q'\in\mathcal{B}_n(Q)}P_n(\overline Q)\Big)\\
        &= \sum_{Q\in\mathcal{A}_n}\sum_{Q'\in\mathcal{G}_n(Q)}\epsilon P_n(\overline Q)P_n(\overline{Q'})+\sum_{Q\in\mathcal{A}_n}\big(\#\mathcal{B}_n(Q)\big)P_n(\overline Q) \\
        &\le \epsilon \bigg(\sum_{Q\in\mathcal{A}_n}P_n(\overline Q)\bigg) \bigg(\sum_{Q'\in\mathcal{A}_n}P_n(\overline{Q'})\bigg)
        + \bigg(\max_{Q\in \mathcal{A}_n}\#\mathcal{B}_n(Q)\bigg)\bigg(\sum_{Q\in\mathcal{A}_n}P_n(\overline Q)\bigg).\\
        \end{split}
    \end{equation*}
    Recalling the notation
    of the Correlation Condition, we have
    \begin{equation}\label{Var}
    \begin{split}
    {\rm Var}(S_n)&\le  \epsilon \bigg(\sum_{Q\in\mathcal{A}_n}P_n(\overline Q)\bigg)^2+f({n},\epsilon)\sum_{Q\in\mathcal{A}_n}P_n(\overline Q)\\
   & =\epsilon \big(\mathbb{E}(S_n)\big)^2 +f({n},\epsilon)\mathbb{E}(S_n).
    \end{split}
    \end{equation}
  Combining (\ref{Var}) and the Paley-Zygmund inequality \cite{Kahane}, we obtain
   \begin{equation*}\label{sn}
        \begin{split}
        \P(S_n>0)&\ge \frac{\bigl(\mathbb{E}(S_n)\bigr)^2}{\mathbb{E}(S_n^2)}=\frac{\bigl(\mathbb{E}(S_n)\bigr)^2}{\bigl(\mathbb{E}(S_n)\bigr)^2+{\rm Var}(S_n)}\\
       & \ge \frac{\mathbb{E}(S_n)}{(1+\epsilon)\mathbb{E}(S_n)+f(n,\epsilon)}.
        \end{split}
    \end{equation*}
   That is,
    \begin{equation}\label{sn1}
    \P(S_n>0)\ge\frac{1}{1+\epsilon+\frac{f(n,\epsilon)}{\mathbb{E}(S_n)}}.
    \end{equation}
    Since $\mathbb{E}(S_n)=\sum_{Q\in\mathcal{A}_n}P_n(\overline Q)\ge N_n(\min_{\overline Q\in\Q_n} P_n(\overline Q))$, recalling that $N_n=\#\mathcal{A}_n$, we have
    \begin{equation}\label{sn2}
    \frac{f(n,\epsilon)}{\mathbb{E}(S_n)}\le \frac{f(n,\epsilon)}{N_n(\min_{\overline Q\in\Q_n} P_n(\overline Q))}.
    \end{equation}
    By the Correlation Condition, for any $\theta>0$
    with $2\theta<\eta-\delta-\gamma_2$, we have
    \[f({n},\epsilon)\le 2^{(\delta+\theta)n}\]
    for $n$ large enough, and from (\ref{pn2}), we have
    \[\min_{\overline Q\in\Q_n}P_n(\overline Q)\ge 2^{-(\gamma_2+\theta)n}\]
    for infinitely many $n$, denoted by $\mathcal{N}$.
    From these inequalities and the arbitrariness of $\theta$, we have
    \begin{equation}\label{si0}
    \limsup_{ n\in \mathcal{N}\atop n\to\infty}\frac{f(n,\epsilon)}{N_n(\min_{\overline Q\in\Q_n} P_n(\overline Q))}
    \le \limsup_{ n\in \mathcal{N}\atop n\to\infty}2^{n(2\theta+\delta+\gamma_2-\eta)}=0.
    \end{equation}
    Then combining inequalities (\ref{sn1})--(\ref{si0}), by Fatou's lemma  and arbitrariness of $\epsilon$, we conclude that 
    \[\P(S_n>0~\, \hbox{ i.o.} )\ge \limsup_{ n\in \mathcal{N}\atop n\to\infty}\P(S_n>0)=1.\]
It follows that
	\[ \mathbb{P}\Big( \bigcup_{n=k}^{\infty}\bigcup_{ Q\in\mathcal{A}_n} \{Z_n(\overline Q)=1\},~\forall k\ge1, ~\forall V\in\mathcal{Q}'\Big)=1.\]
	Then given $\omega$ in the above event, for $Q_0=[0,1)^d$, there is some $k_1\ge1$ such that  
	\[\exists~ Q_1\in\Q'_{k_1},\quad Q_1\cap G_{\star}\cap Q_0\ne\varnothing\quad {\rm and}\quad Z_{k_1}(\overline{Q_1})(\omega)=1.\]
	Since $Q_1\subset Q_0$ and $G_{\star}\cap Q_1\ne\varnothing$, there is some $k_2\ge k_1$ such that 
	\[\exists~ Q_2\in\Q'_{k_2}\quad Q_2\cap G_{\star}\cap Q_1\ne\varnothing\quad{\rm and}\quad Z_{k_2}(\overline{Q_2})(\omega)=1. \]
       We continue inductively, then get a sequence $\{Q_i\}$ such that for all $i\ge1$, 
	\[Q_i\in\Q'_{k_i}\quad  Q_i\cap G_{\star}\cap Q_{i-1}\ne\varnothing\quad {\rm and}\quad Z_{k_i}(\overline{Q_i})(\omega)=1.\]
	Notice that $\{Q_i\}$ are decreasing hyper cubes. Recall that $K\supset G_{\star}$ is closed. Hence $\varnothing\ne\bigcap_{i=1}^{\infty}\overline{Q_i}\subset K\cap A(\omega)$. Thus $A\cap G\ne\varnothing$ a.s. 

\end{proof}

We use the following lemma to prove Corollary \ref{cor}.
\begin{lem}[\cite{KPX}]\label{lemp}
Suppose $A=A(\omega)$ is a random set in $[0,1]^d$ (i.e. $\chi_{A(\omega)}(x)$ is jointly measurable) such that for any compact set $E\subset [0,1]^d$ with $\hdim(E)>\gamma$, we have $\P(A\cap E)=1$. Then for any analytic set $E\subset [0,1]^d$,
\[\hdim(E)-\gamma\le\hdim(A\cap E)\quad a.s.\]
\end{lem}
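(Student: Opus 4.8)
The plan is to prove the almost sure lower bound $\hdim(A\cap E)\ge\hdim(E)-\gamma$ by a codimension (fractal percolation) argument. Since the inequality is trivial when $\hdim(E)\le\gamma$, I assume $\hdim(E)>\gamma$, and I reduce everything to the following claim: for each fixed rational $s$ with $0<s<\hdim(E)-\gamma$ one has $\hdim(A\cap E)\ge s$ almost surely. Intersecting these countably many almost sure events along a sequence $s_m\uparrow\hdim(E)-\gamma$ then gives the statement, so I fix such an $s$ (note $s<d$).

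First I would reduce to a compact target. Because $E$ is analytic and $\hdim(E)>\gamma+s$, there is a compact $K\subset E$ with $\hdim(K)>\gamma+s$ (analytic sets contain compact subsets of dimension arbitrarily close to their own; compare the use of \cite{How} in the proof of Lemma \ref{haus1}). Since $A\cap K\subset A\cap E$, it suffices to show $\hdim(A\cap K)\ge s$ almost surely. I then introduce an auxiliary Mandelbrot percolation limit set $\Lambda$ in $[0,1]^d$ of codimension $s$: subdivide dyadically into $2^{d}$ subcubes at each generation and retain each independently with probability $2^{-s}$, so that $2^{d-s}>1$ and $\hdim(\Lambda)=d-s$ on survival; take $\Lambda$ independent of $A$. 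I will use two standard facts about $\Lambda$: (a) for every fixed analytic $F$ with $\hdim(F)<s$ one has $\P(\Lambda\cap F\ne\varnothing)=0$ (first moment); and (b) for every fixed compact $F$ with $\hdim(F)>s$ one has, for any prescribed $\epsilon>0$, $\hdim(\Lambda\cap F)\ge\hdim(F)-s-\epsilon$ with positive probability (the second-moment/energy estimate for percolation). Applying (b) to $K$ with $\epsilon$ small enough that $\hdim(K)-s-\epsilon>\gamma$ yields a single-copy probability $p_1:=\P\big(\hdim(\Lambda\cap K)>\gamma\big)>0$.

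The heart of the argument, and the step I expect to be the main obstacle, is upgrading a positive-probability statement to an almost sure one. Conditioning on $\Lambda$ and using independence together with the hitting hypothesis applied to the compact set $\Lambda\cap K$ (which has $\hdim>\gamma$ on an event of $\Lambda$-probability $p_1$), I get $\P_A\big(A\cap(\Lambda\cap K)\ne\varnothing\mid\Lambda\big)=1$ on that event, whence by Fubini
\[
\P\big(A\cap K\cap\Lambda\ne\varnothing\big)\ \ge\ \P\big(\hdim(\Lambda\cap K)>\gamma\big)\ =\ p_1\ >\ 0 .
\]
Writing $g(\omega)=\P_\Lambda\big(A(\omega)\cap K\cap\Lambda\ne\varnothing\big)$, properties (a) and (b) show that, up to the negligible boundary case, $g(\omega)>0$ precisely when $\hdim(A(\omega)\cap K)\ge s$; since $g\le 1$ gives $\P(g>0)\ge\e[g]\ge p_1$, a single copy of $\Lambda$ only produces $\P\big(\hdim(A\cap K)\ge s\big)\ge p_1$, i.e. positive probability.

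To reach probability one I would replace $\Lambda$ by a union $\Lambda^{(L)}=\Lambda_1\cup\cdots\cup\Lambda_L$ of $L$ independent copies. Property (a) survives finite unions, so $g_L(\omega):=\P_{\Lambda^{(L)}}\big(A(\omega)\cap K\cap\Lambda^{(L)}\ne\varnothing\big)$ still satisfies $\{g_L>0\}=\{\hdim(A\cap K)\ge s\}$ up to a null event, and crucially this event does not depend on $L$; meanwhile the survival probability improves, since $\hdim\big(\Lambda^{(L)}\cap K\big)\ge\max_i\hdim(\Lambda_i\cap K)$ gives $\P\big(\hdim(\Lambda^{(L)}\cap K)>\gamma\big)\ge 1-(1-p_1)^{L}$. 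As above $\P(g_L>0)\ge\e[g_L]\ge 1-(1-p_1)^{L}$ for every $L$, so letting $L\to\infty$ forces $\P\big(\hdim(A\cap K)\ge s\big)=1$. Taking $s_m\uparrow\hdim(E)-\gamma$ with associated compact sets $K_m\subset E$ and intersecting the resulting almost sure events yields $\hdim(A\cap E)\ge\sup_m s_m=\hdim(E)-\gamma$ almost surely. Throughout, the joint measurability of $\chi_{A(\omega)}$ ensures that $A\cap K$ is a random analytic set, that $\omega\mapsto\hdim(A(\omega)\cap K)$ is measurable, and that the Fubini interchange is legitimate.
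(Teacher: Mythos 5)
Your proof is correct, but note that the paper itself gives no proof of this lemma: it is stated as a citation to \cite{KPX}, so the only argument to compare against is the original one of Khoshnevisan--Peres--Xiao. Your route is essentially that classical codimension argument: reduce to a compact $K\subset E$ with $\hdim(K)>\gamma+s$ via Howroyd's theorem, intersect with an independent Mandelbrot percolation set $\Lambda$ of codimension $s$, and invoke the Hawkes--Lyons--Peres facts (sets of dimension $<s$ are a.s.\ missed by $\Lambda$; a compact set of dimension $>s$ meets $\Lambda$ in dimension $\ge\hdim(K)-s-\epsilon$ with positive probability). Your handling of the crux --- upgrading $\P(\hdim(A\cap K)\ge s)>0$ to probability one --- via $L$ independent copies of $\Lambda$ is valid: fact (a) is stable under finite unions, so $\{g_L>0\}\subset\{\hdim(A\cap K)\ge s\}$ holds pointwise in $\omega$ (not merely up to null sets), while independence of the copies and compactness of $\Lambda^{(L)}\cap K$ give $\e[g_L]\ge 1-(1-p_1)^L$; letting $L\to\infty$ finishes. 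An alternative, slightly slicker upgrade in the same spirit: the hitting hypothesis on $A$ survives replacing $\P$ by $\P(\cdot\mid\Omega_0)$ for any $\Omega_0\in\sigma(A)$ with $\P(\Omega_0)>0$, so applying the positive-probability conclusion under $\P(\cdot\mid\{\hdim(A\cap K)<s\})$ gives an immediate contradiction unless that event is null. Two points to keep honest in a write-up: state fact (a) for \emph{arbitrary} sets $F$ with $\hdim(F)<s$, since $A(\omega)\cap K$ need not be analytic --- this costs nothing, as the first-moment bound $\P(\Lambda\cap F\ne\varnothing)\le\sum_i|Q_i|^s$ uses only outer covers; and measurability of $\omega\mapsto\hdim(A(\omega)\cap K)$ is not actually required, since the measurable events $\{g_L>0\}$ sit inside the target event and carry probability tending to one.
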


\begin{proof}[Proof of Corollary \ref{cor}]
The proof of the upper bound is similar to that in \cite{KPX}, and is included for completeness. Since $\pdim(G)<\gamma_1$ implies $A\cap G=\varnothing $ a.s.,
we consider the case of $\pdim(G)\ge\gamma_1$. By (\ref{dimp}), it suffices to prove $\hdim(A\cap G)\le \overline{\dim}_{\rm B}(G)-\gamma_1$. Recall the notation in Theorem \ref{limsup1} and the start of Section 2, and define
 \[H_n=\sum_{B\in\mathcal{C}_{\sqrt{d}2^{-n-1}}(G)}\sum_{Q\in\Gamma_n(B)}Z_n(Q), \]
For $\eta=\overline{\dim}_{\rm B}(G)-\gamma_1$ and any $\epsilon>0$,  for large enough $n$ we have
\[\mathbb{E}(H_n)=\sum_{B\in\mathcal{C}_{\sqrt{d}2^{-n-1}}(G)}\sum_{Q\in\Gamma_n(B)}P_n(Q)\lesssim 2^{n(\eta+2\epsilon)}. \]
Due to the arbitrariness of $\epsilon$, for any $\theta>\eta$, we have $\mathbb{E}(\sum_{n=1}^{\infty}H_n2^{-n\theta})<\infty$. Therefore,
\[\mathcal{H}^{\theta}(A\cap G)\le \liminf_{m\to\infty}\sum_{n=m}^{\infty}H_n2^{-n\theta}=0\quad {\rm a.s.},\]
giving $\hdim(A\cap G)\le \overline{\dim}_{\rm B}(G)-\gamma_1$ a.s.

The left-hand inequality in Corollary \ref{cor} follows from Lemma \ref{lemp} and Theorem \ref{limsup1}.

\end{proof}

\begin{proof}[Proof of Proposition \ref{exam}]
Let $(t_k)_{k\ge1}$ be an increasing sequence with $t_k\in(0,1)$ and $\lim\limits_{k\to\infty}t_k=1$. Let $(n_k)_{k\ge1}$ and $(m_k)_{k\ge1}$ be increasing sequences of positive integers with $m_k<n_k$ (to be determined later).
We first construct a homogeneous Cantor set $G$ as follows. We divide $[0,1]$ into $N_1:=2^{m_1}$ closed intervals, and inside each interval, choose a  closed subinterval with length $l_1:=2^{-n_1}$. Denote this collection of intervals by $\mathcal{G}_1$. Let
$I_1^1, ~I_1^2,\dots, ~I_1^{N_1}$ be the intervals in $\mathcal{G}_1$, arranged from left to right, 
 such that $I_1^1$ has the same  left endpoint as $[0,1]$, and $I_1^{N_1}$ has the same right endpoint as $[0,1]$.

Suppose $\mathcal{G}_k$ has been chosen, which is comprised of some closed intervals, each with length $l_k$. For any $I\in\mathcal{G}_k$, we divide $I$ into $2^{m_{k+1}}$ intervals of length $2^{-m_{k+1}}|I|$. Then, inside of each subinterval of length $2^{-m_k}l_k$, we choose one closed interval of length $l_{k+1}:=2^{-n_{k+1}}l_k$
such that the first subinterval has the same left endpoint as $I$, and the last subinterval has the same right endpoint as $I$.
Denote these $N_{k+1}=2^{m_{k+1}}N_k$ intervals with length $l_{k+1}$ by $\mathcal{G}_{k+1}$. Let $G_k=\bigcup_{I\in \mathcal{G}_k}I$, and $G=\bigcap_{k\ge1} G_k$.
Choosing $m_{k+1}$ large enough (depending on the choices of $(n_i)_{1\le i\le k}$ and $(m_i)_{1\le i\le k}$) such that $2^{m_{k+1}(1-t_k)}N_kl_k^{t_k}\ge1$, from \cite{FWW}, we have
\begin{equation*}
\begin{split}
\pdim (G)&=\limsup_{k\to\infty}\frac{\log N_{k+1}}{-\log (l_k)+\log (N_{k+1}/N_k)}\\
&=\limsup_{k\to\infty}\frac{\log (2^{m_{k+1}}N_k)}{\log (l_k^{-1}2^{m_{k+1}})}\ge \limsup_{k\to\infty}t_k=1.
\end{split}
\end{equation*}

Before constructing a limsup random fractal, we fix some notation. Let $(\xi_m)_{m\ge1}$ be a sequence of independent random variables on $(\Omega,\mathscr{B},\P)$ which are uniformly distributed on $[0,1]$.
Let $M_k=\sum_{i=1}^kn_i$, and $b_n=2^{M_kt_k}$ if $n\in[M_k,M_{k+1})$. Define the set
$B=\{\exists m \in [b_{n-1},b_n){\rm ~such ~that ~} \xi_m\in Q\}$ and set $Z_n(Q)=\chi_B$. Let $A(n)=\bigcup_{Q\in\mathcal{Q}_n\atop Z_n(Q)=1}Q$,  and $A=\limsup\limits_{n\to\infty}A(n).$

For every $Q\in\Q_n$, it follows from our assumption of $(\xi_m)_{m\ge1}$ and the definition of $\Q_n$ that $\P(Z_n(Q)=1)$ does not depend on $Q$, denoted by $P_n$.
Therefore,
 \begin{equation}\label{up}
 P_n=\P\Big(\bigcup_{m=b_{n-1}}^{b_n-1}\xi_m\in Q\Big)\le \sum_{m=b_{n-1}}^{b_n-1}\P(\xi_m\in Q)=2^{-n}(b_n-b_{n-1}),
 \end{equation}
 and
 \begin{equation}\label{low}
 \begin{split}
 P_n&\ge \sum_{m=b_{n-1}}^{b_n-1}\P(\xi_m\in Q)-\sum_{m=b_{n-1}}^{b_n-1}\sum_{m'=b_{n-1}\atop m'\ne m}^{b_n-1}\P(\xi_m\in Q, \xi_{m'}\in Q)\\
 &=\sum_{m=b_{n-1}}^{b_n-1}\P(\xi_m\in Q)\Big(1-\sum_{m'=b_{n-1}\atop m'\ne m}^{b_n-1}\P(\xi_m\in Q)\Big)\\
 &=2^{-n}(b_n-b_{n-1})(1-2^{-n}(b_n-b_{n-1})).
 \end{split}
 \end{equation}
Write $x_n=2^{-n}(b_n-b_{n-1})$. Note that for $k\ge1 $, if $M_k<n<M_{k+1}$, $x_n=0$, and $x_{M_k}=\frac{2^{M_kt_k}-2^{M_{k-1}t_{k-1}}}{2^{M_k}}$.
From this and inequalities (\ref{up}) and (\ref{low}), we have
 \begin{equation}\label{pn0}
  \limsup_{n\to\infty} \frac{\log_2P_n}{n}=0.
  \end{equation}

Next we prove that $A$ satisfies the  Correlation Condition with $\delta=0$. First we estimate ${\rm Cov}(Z_n(Q),Z_n(Q'))$ for $Q,Q'\in\Q_n$ with the distance ${\rm dist}(Q,Q')\ge 2^{-n}$,
\begin{equation}\label{cov}
\begin{split}
{\rm Cov}(Z_n(Q),Z_n(Q'))&=\e(Z_n(Q)Z_n(Q'))-\e(Z_n(Q))\e(Z_n(Q'))\\
&\le \P(Z_n(Q)=1,Z_n(Q')=1)-P_n^2\\
&\le \sum_{m=b_{n-1}}^{b_n-1}\P(\xi_m\in Q)\sum_{m'=b_{n-1}\atop m'\ne m}^{b_n-1}\P(\xi_m\in Q)-P_n^2\\
&\le 2(\sum_{m=b_{n-1}}^{b_n-1}\P(\xi_m\in Q))\sum_{m=b_{n-1}}^{b_n-1}\sum_{m'=b_{n-1}\atop m'\ne m}^{b_n-1}\P(\xi_m\in Q, \xi_{m'}\in Q')\\
& \lesssim 2^{-n+1}(b_n-b_{n-1})\e(Z_n(Q))\e(Z_n(Q')).
\end{split}
\end{equation}
From (\ref{pn0}), for $\epsilon>0$, there is $n(\epsilon)$ such that for any $n\ge n(\epsilon)$, $2^{-n+1}(b_n-b_{n-1})<\epsilon$. It implies that for $n$ large enough, we have $f(n,\epsilon)\le3$, hence $A$ satisfies the Correlation Condition with $\delta=0$.

Denote $A_k=\bigcup_{n=M_{k-1}+1}^{M_k}A(n)$, then
\begin{equation}\label{est1}
\begin{split}
\P(A_k\cap G_k\ne\varnothing)&\le \sum_{I\in \mathcal{G}_k}\P(I\cap A_k\ne\varnothing)\\
&\le N_kn_k\max_{M_{k-1}+1\le n\le M_k}\P(I\cap A(n)\ne\varnothing)\\
&\le 2N_kn_k\max_{M_{k-1}+1\le n\le M_k}P_n\lesssim N_kn_k2^{-M_k}2^{M_kt_k}\\
&=n_kN_kl_k2^{M_kt_k},
\end{split}
\end{equation}
and by choosing $n_k$ large enough (depending on $t_k$, the choice of $(n_i)_{1\le i<k}$ and $(m_i)_{1\le i\le k}$), we have $n_kN_kl_k2^{M_kt_k}\le 2^{-k}$. It then follows from the Borel-Cantelli lemma that
\[\P(A_k\cap G_k\ne\varnothing ~~i.o.)=0.\]
This means that there is some $N\ge1$ such that for any $k\ge N$, $A_k\cap G_k=\varnothing$ a.s. Since $G\subset G_k$, $k\ge 1$, it yields that $G\cap A(k)\ne\varnothing$ for only finitely many $k$. Therefore $\P(A\cap G\ne\varnothing)=0$.

\end{proof}

 To show Proposition \ref{exam2}, we make use of the following lemma. Here we suppose that $(\xi_m)_{m\ge1}$ is a sequence of independent and uniformly distributed random variables on $[0,1]$, which are defined on $(\Omega,\mathscr{B},\P)$.
  \begin{lemma}\label{lem1}
 Let $0\le \gamma_0<1$ and $a_n=2^{n(1-\gamma_0)}$. For any $Q\in\mathcal{Q}_n$, write $B=\{\exists~ m\in[a_{n-1},a_n)~s.t.~\xi_m\in Q\}$ and set $Z_n(Q)=\chi_{B}$. 
 For $0<\beta<1-\gamma_0$, let $$A^{\beta}(n)=\bigcup_{Q\in\mathcal{Q}_n\atop Z_n(Q)=1}{Q}^{\beta},$$ 
 where $Q^{\beta}$ is the interval with length $|Q|^{\beta}$ concentric with $Q$. Then
 \[\lim_{n\to\infty} {\rm \P}\Big(A^{\beta}(n)=[0,1]\Big)=1.\]
 \end{lemma}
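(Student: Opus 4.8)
The plan is to reduce the event $\{A^{\beta}(n)=[0,1]\}$ to a purely combinatorial covering statement for each fixed $n$, and then to bound the failure probability by a union bound over a partition of $[0,1]$ into short blocks. Since the conclusion is only $\lim_{n\to\infty}\mathbb{P}(A^{\beta}(n)=[0,1])=1$, I do not need a Borel--Cantelli argument: it suffices to show $\mathbb{P}(A^{\beta}(n)\ne[0,1])\to 0$.

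First I would fix $n$ and partition $[0,1]$ into $K_n\asymp 2^{n\beta}$ consecutive blocks $J_1,\dots,J_{K_n}$, each of length at most $\tfrac14 2^{-n\beta}$. The geometric observation is that if a block $J_i$ contains the center $c_Q$ of some order-$n$ dyadic cube $Q$ with $Z_n(Q)=1$, then, since $|Q^{\beta}|=2^{-n\beta}$ and $c_Q\in J_i$, every point of $J_i$ lies within $|J_i|\le\tfrac14 2^{-n\beta}<\tfrac12|Q^{\beta}|$ of $c_Q$, so $J_i\subset Q^{\beta}$. Consequently $A^{\beta}(n)=[0,1]$ whenever every block contains at least one ``hit'' center, and it suffices to estimate $\sum_i\mathbb{P}(E_i)$, where $E_i$ is the event that $J_i$ contains no hit center.

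Next I would estimate $\mathbb{P}(E_i)$. Let $U_i$ be the union of the order-$n$ dyadic cubes whose centers lie in $J_i$; since $|J_i|\asymp 2^{-n\beta}$ and each cube has length $2^{-n}\ll 2^{-n\beta}$, one has $|U_i|\gtrsim 2^{-n\beta}$ for $n$ large. The event $E_i$ is precisely that none of the independent uniform points $\xi_m$, $m\in[a_{n-1},a_n)\cap\mathbb{Z}$, falls in $U_i$, so by independence $\mathbb{P}(E_i)=(1-|U_i|)^{\#\{m\}}\le\exp\!\big(-|U_i|(a_n-a_{n-1}-1)\big)$. Using $a_n-a_{n-1}\asymp 2^{n(1-\gamma_0)}$ then gives $\mathbb{P}(E_i)\le\exp\!\big(-c\,2^{n(1-\gamma_0-\beta)}\big)$ for some constant $c>0$ and all large $n$. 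The union bound finally yields $\mathbb{P}(A^{\beta}(n)\ne[0,1])\le K_n\exp\!\big(-c\,2^{n(1-\gamma_0-\beta)}\big)\lesssim 2^{n\beta}\exp\!\big(-c\,2^{n(1-\gamma_0-\beta)}\big)$, which tends to $0$ because $\beta<1-\gamma_0$ makes the exponent $1-\gamma_0-\beta$ positive.

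The only real obstacle is getting this last balance right, which is exactly what the hypothesis $\beta<1-\gamma_0$ encodes: the blow-up scale $2^{-n\beta}$ must be large enough, relative to the number $\asymp 2^{n(1-\gamma_0)}$ of points thrown down at stage $n$, that the per-block miss probability decays faster than any power, so that the doubly-exponential tail overwhelms the polynomial-in-$2^{n}$ factor $2^{n\beta}$ counting the blocks. Everything else (the partition, the covering claim, and the i.i.d. miss probability) is routine bookkeeping once this scaling is in place.
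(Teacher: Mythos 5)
Your proof is correct and follows essentially the same route as the paper's: the geometric reduction (a hit cube's $\beta$-blowup covers every point within $\tfrac12 2^{-n\beta}$ of its center), the independence bound $(1-\Theta(2^{-n\beta}))^{\Theta(2^{n(1-\gamma_0)})}$ for the per-location miss probability, and a union bound that wins because $\beta<1-\gamma_0$. The only difference is bookkeeping: you take the union bound over $\asymp 2^{n\beta}$ blocks of length $\asymp 2^{-n\beta}$, whereas the paper runs it over all $2^n$ dyadic cubes $Q$, each paired with its window $\mathcal{Q}_n(Q)$ of cubes within distance about $\tfrac12 2^{-n\beta}$ — either count is harmless against the super-exponentially small miss probability.
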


\begin{proof}
For $n\ge1$, $Q\in\Q_n$, write
\[\Q_n(Q)=\big\{Q'\in\Q_n\colon {\rm dist}(Q,Q')\le 2^{-(n+1)}(2^{(1-\beta)n}-3) \big\},\]
hence $\#\Q_n(Q)=\lfloor2^{(1-\beta)n}\rfloor$. Note that \[\{ Q\nsubseteq A^{\beta}(n) \} \subset\{Z_n(Q')=0, {\rm for ~all ~}Q'\in\Q_n(Q)\}.\]
 Then for $n$ large enough, we have
\begin{equation}\label{1}
\begin{split}
\P\big(Q\nsubseteq A^{\beta}(n)\big )&\le \P\big(Z_n(Q')=0 {\rm ~for~all~} Q'\in\Q_n(Q)\big) \\
&\le \prod_{m=a_{n-1}}^{a_n}\P\Big(\xi_m\notin \bigcup_{Q'\in\Q_n(Q)}Q'\Big)\\
&=(1-\lfloor2^{(1-\beta)n}\rfloor 2^{-n})^{2^{n(1-\gamma_0)}}\\
&\le (1-2^{-n\beta-1})^{2^{n(1-\gamma_0)}}
 \end{split}
\end{equation}
Note that $\{[0,1]\nsubseteq A^{\beta}(n)\}= \{\exists ~Q\in\Q_n~{\rm such~that} ~ Q\nsubseteq A^{\beta}(n) \}$, and hence
\begin{equation}\label{2}
\begin{split}
\P\big([0,1]\nsubseteq A^{\beta}(n)\big)&=\P\big(\exists ~Q\in\Q_n~{\rm such~that} ~ Q\nsubseteq A^{\beta}(n) \big)\\
&\le\sum_{Q\in\Q_n}\P\big(Q\nsubseteq A^{\beta}(n) \big)\\
&\le \sum_{Q\in\Q_n}\P(Z_n(Q')=0{\rm ~for~all~} Q'\in\Q_n(Q)) \\
&\le \sum_{Q\in\Q_n}\prod_{m=a_{n-1}}^{a_n}\P\Big(\xi_m\notin \bigcup_{Q'\in\Q_n(Q)}Q'\Big)\\
&\le 2^n(1-2^{-n\beta-1})^{2^{n(1-\gamma_0)}},
 \end{split}
\end{equation}
which tends to 0 as $n\to\infty$.
 \end{proof}

 \begin{proof}[Proof of Proposition \ref{exam2}]

 Let $(n_k)_{k\ge1}$ be an increasing sequence of positive integers 
 (to be determined later). We divide $[0,1]$ into $N_1:=\lfloor2^{n_1t}\rfloor$ closed intervals, and inside each interval, choose a closed subinterval with length $l_1:=2^{-n_1}$,
 with the first subinterval on the left  having the same left endpoint as $[0,1]$,
 and the right endpoint of the last subinterval on the right having the same as that of $[0,1]$. We denote this collection of intervals by $\mathcal{G}_1$.

Suppose that $\mathcal{G}_k$, consisting of some closed intervals with length $l_k$, has been chosen. Then for any $I\in\mathcal{G}_k$, we divide $I$ into $\lfloor2^{n_{k+1}t}\rfloor$ closed intervals of length $2^{-\lfloor n_{k+1}t\rfloor}l_k$ and inside each of these, choose one closed interval of length $l_{k+1}:=2^{-n_{k+1}}l_k$, such that the left endpoint of the first subinterval is the same as that of $I$,
and the right endpoint of the last subinterval is the same as that of $I$.
Denote these $N_{k+1}=\lfloor2^{n_{k+1}t}\rfloor N_k$ intervals with length $l_{k+1}$ by $\mathcal{G}_{k+1}$. Let $G_k=\bigcup_{I\in \mathcal{G}_k}I$, and $G=\bigcap_{k\ge1} G_k$.
$G$ is then a homogeneous Cantor set. From \cite{FWW}, we have
\[\hdim (G)=\liminf_{k\to\infty}\frac{\log N_{k+1}}{-\log l_k}=\liminf_{k\to\infty}\frac{\log \prod_{i=1}^k\lfloor2^{n_it}\rfloor}{\log \prod_{i=1}^k 2^{n_i}}\le t,\]
and for $k$ large enough we also have
\[\frac{\log \prod_{i=1}^k\lfloor2^{n_it}\rfloor}{\log \prod_{i=1}^k 2^{n_i}}\ge\frac{\log \prod_{i=1}^k (2^{n_it}-1)}{\log \prod_{i=1}^k 2^{n_i}}=t+\frac{\log \prod_{i=1}^k (1-2^{-n_it})}{\sum_{i=1}^kn_i}
.\]
Since $\sum_{i=1}^{\infty} 2^{-n_it}<\infty$, it follows that $\prod_{i=1}^{\infty} (1-2^{-n_it})>0$, implying $\frac{\log \prod_{i=1}^k (1-2^{-n_it})}{\sum_{i=1}^kn_i}\to 0$ as $k\to\infty$. Therefore $\hdim (G)=t$. 
From the proof of Proposition \ref{exam}, we choose an appropriate $(n_k)_{k\ge1}$ such that $\pdim (G)=1$.

Using the same notation as Lemma \ref{lem1}, 
let 
 $A(n)=\bigcup_{Q\in\Q\atop Z_n(Q)=1}Q$ and $A=\limsup\limits_{n\to\infty} A(n)$.
We assume that $\gamma_0>0$, since we can replace  $1-\gamma_0$ by an increasing sequence of positive real numbers with limit $1-\gamma_0$ if $\gamma_0=0$. For every $Q\in\Q_n$, the probability $\P(Z_n(Q)=1)$, denoted by $P_n$, does not depend on $Q$.
 Then
 \begin{equation}\label{upper}
 P_n=\P(\bigcup_{m=a_{n-1}}^{a_n-1}\xi_m\in Q)\le \sum_{m=a_{n-1}}^{a_n-1}\P(\xi_m\in Q)=2^{-n}(a_n-a_{n-1})\asymp 2^{-n\gamma_0},
 \end{equation}
 and
 \begin{equation}\label{lower}
 \begin{split}
P_n&\ge \sum_{m=a_{n-1}}^{a_n-1}\P(\xi_m\in Q)-\sum_{m=a_{n-1}}^{a_n-1}\sum_{m'=a_{n-1}\atop m'\ne m}^{a_n-1}\P(\xi_m\in Q, \xi_{m'}\in Q)\\
 &=\sum_{m=a_{n-1}}^{a_n-1}\P(\xi_m\in Q)\Big(1-\sum_{m'=a_{n-1}\atop m'\ne m}^{a_n-1}\P(\xi_m\in Q)\Big)\\
 &\asymp 2^{-n\gamma_0}.
 \end{split}
 \end{equation}
 We can conclude from (\ref{upper}) and (\ref{lower}) that
 \[ \lim_{n\to\infty} \frac{\log_2P_n}{n}=-\gamma_0.\]
 For $Q,Q'\in\Q_n$ with the distance ${\rm dist}(Q,Q')\ge 2^{-n}$, since
\begin{equation}\label{cov}
\begin{split}
{\rm Cov}(Z_n(Q),Z_n(Q'))&=\e(Z_n(Q)Z_n(Q'))-\e(Z_n(Q))\e(Z_n(Q'))\\
&\le \P(Z_n(Q)=1,Z_n(Q')=1)-P_n^2\\
&\le \sum_{m=a_{n-1}}^{a_n-1}\P(\xi_m\in Q)\sum_{m'=a_{n-1}\atop m'\ne m}^{a_n-1}\P(\xi_m\in Q)-P_n^2\\
&\le 2(\sum_{m=a_{n-1}}^{a_n-1}\P(\xi_m\in Q))\sum_{m=a_{n-1}}^{a_n-1}\sum_{m'=a_{n-1}\atop m'\ne m}^{a_n-1}\P(\xi_m\in Q, \xi_{m'}\in Q')\\
& \asymp 2^{-n\gamma_0}\e(Z_n(Q))\e(Z_n(Q')),
\end{split}
\end{equation}
for $\epsilon>0$, we have $f(n,\epsilon)\le3$ for $n$ large enough. Then we have proved that $A$ satisfies the Correlation Condition with $\delta=0$.

 If $\gamma_0=1$, then $\hdim (A)=0$, which implies that $\hdim (A\cap G)=0$. We therefore consider the case $\gamma_0 < 1$.
  Since $\hdim(F\cap A)\le \min\{t, 1-\gamma_0\}$, we prove that $\hdim(F\cap A)\ge \min\{t, 1-\gamma_0\}$ a.s.
  Let $(\epsilon_n)_{n\ge1}$ be a sequence with $\epsilon_n>0$ for all $n$ and such that
$\sum_{n=1}^{\infty}\epsilon_n<1$, and let $(t_n)_{n\ge1}$
be an increasing sequence with $\lim\limits_{n\to\infty}t_n=1-\gamma_0$. By Lemma \ref{lem1}, there is $C\ge 1$ such that for all $n\ge C$, we have
 \[ \P\big(\mathbb{T}=A^{t_n}(n)\big)>1-\epsilon_n,\]
 then using the independence of $(\xi_m)_{n\ge1}$, we derive
 \[\P( \mathbb{T}=A^{t_n}(n){\rm~for~all~} n\ge C)=\prod_{n=C}^{\infty}\P(\mathbb{T}=A^{t_n}(n))>1-\sum_{n=C}^{\infty}\epsilon_n>0.\]

 Fix $\omega\in\{\mathbb{T}=A^{t_n}(n){\rm~for~all~} n\}$, and without losing generality, we suppose $n_1\ge C$. Let $(m_k)_{k\ge1}$ be an increasing sequence of positive integers satisfying $m_k>\sum_{i=1}^{k}n_i$ for $k\ge1$(to be determined later).
 Let $\mathcal{G}'_1=\mathcal{G}_1$. For each $I\in\mathcal{G}'_1$, by Lemma \ref{lem1},
 we can choose a subfamily $\mathcal{A}_1(I):=\{Q\in\Q_{m_1}\colon Q^{t_{m_1}}\subset I\}$ such that $\#\mathcal{A}_1(I)=\lfloor \frac{l_1}{2^{-m_1t_{m_1}}}\rfloor-2${\color{red} ;}
 here we can choose $m_1>\frac{n_1+1}{t_{m_1}}$ such that $\mathcal{A}_1(I)$ exists. Let $\mathcal{F}_1=\{ \mathcal{A}_1(I)\colon I\in \mathcal{G}'_1\}$, then we have $\bigcup_{Q\in\mathcal{G}_1}Q\subset A(m_1)(\omega)$
 and $\#\mathcal{F}_1=\#\mathcal{G}'_1(\lfloor \frac{l_1}{2^{-m_1t_{m_1}}}\rfloor-2)$.

 For $Q\in \mathcal{F}_1$, choose $\lfloor\frac{2^{-m_1}\lfloor2^{n_2t}\rfloor}{ l_1}\rfloor-2$ elements in $\mathcal{G}_2$ which are contained in $Q$. We denote the chosen elements by $\mathcal{G}'_2$,
 that is
 $\mathcal{G}'_2=\{J\in\mathcal{G}_2\colon J\subset Q,~Q\in \mathcal{F}_1\},$
 and $\#\mathcal{G}'_2=(\lfloor\frac{2^{-m_1}\lfloor2^{n_2t}\rfloor}{ l_1}\rfloor-2)\# \mathcal{F}_1$. For $J\in \mathcal{G}'_2$, choose a subfamily $\mathcal{A}_2(J):=\{Q\in\Q_{m_2}\colon Q^{t_{m_2}}\subset J\}$
 such that $\#\mathcal{A}_2(J)=\lfloor \frac{l_2}{2^{-m_2t_{m_2}}}\rfloor-2$, here we can select $m_2>\frac{n_1+n_2+1}{t_{m_2}}$ to guarantee it.
 Write $\mathcal{F}_2=\{\mathcal{A}_2(J)\colon J\in \mathcal{G}'_2\}$, then $\#\mathcal{F}_2=\#\mathcal{G}'_2(\lfloor \frac{l_2}{2^{-m_2t_{m_2}}}\rfloor-2)$.

Continuing inductively, we have families $\mathcal{G}'_k$ and $\mathcal{F}_k$ with
 \begin{equation*}\label{gk}
 \#\mathcal{F}_k=\Big(\Big\lfloor \frac{l_k}{2^{-m_kt_{m_k}}}\Big\rfloor-2\Big)\#\F'_k=N_1\prod_{i=1}^k\Big(\Big\lfloor \frac{l_i}{2^{-m_it_{m_i}}}\Big\rfloor-2\Big)\prod_{i=1}^{k-1}\Big(\Big\lfloor\frac{2^{-m_i}\lfloor2^{n_{i+1}t}\rfloor}{ l_i}\Big\rfloor-2\Big),\\
 \end{equation*}
  and
  \begin{equation*}\label{fk}
  \#\mathcal{G}'_{k+1}=\Big(\Big\lfloor\frac{2^{-m_k}\lfloor2^{n_{k+1}t}\rfloor}{ l_k}\Big\rfloor-2\Big)\#\mathcal{G}_k=N_1\prod_{i=1}^k\Big(\Big\lfloor \frac{l_i}{2^{-m_it_{m_i}}}\Big\rfloor-2\Big)\Big(\Big\lfloor\frac{2^{-m_i}\lfloor2^{n_{i+1}t}\rfloor}{ l_i}\Big\rfloor-2\Big).\\
  \end{equation*}
  We choose $m_k$ and $n_{k+1}$ large enough such that
  \begin{equation}\label{f1}
 \#\mathcal{F}_k  \asymp \frac{2^{(1-t)(\sum_{i=1}^k n_i)}}{2^{(\sum_{i=1}^k m_i(1-t_{m_i}))-m_k}}\quad {\rm and }\quad
   \lim_{k\to\infty}\frac{\log \#\mathcal{F}_k}{-\log 2^{-m_k}}=1-\gamma_0 ,
   \end{equation}
 and 
  \begin{equation}\label{f2}
    \#\mathcal{G}'_{k+1}\asymp \frac{2^{t(\sum_{i=1}^{k+1}n_i)}}{2^{\sum_{i=1}^{k}m_i(1-t_{m_i})}}\quad {\rm and }\quad
  \lim_{k\to\infty}\frac{\log \#\mathcal{G}'_k}{-\log l_k}=t.
  \end{equation}
  Let $F=\bigcap_{k\ge1}\bigcup_{Q\in \F_k}Q=\bigcap_{k\ge1}\bigcup_{I\in \mathcal{G}'_k}I$.
  Then $F\subset G\cap A(\omega)$.

 By (\ref{f1}), (\ref{f2}) and Lemma 2 in \cite{FWW}, we conclude that $$\hdim (F)=\min\{1-\gamma_0, ~t\}.$$ Hence $\hdim(G\cap A)\ge \min\{t, 1-\gamma_0\}$ with positive probability.
  Since $\hdim(G\cap A)\ge \min\{t, 1-\gamma_0\}$ is a tail event, by the Kolmogorov zero-one law, we have $\hdim(G\cap A)= \min\{t, 1-\gamma_0\}$ a.s.

 \end{proof}

\subsection*{Acknowledgements}
We are very grateful to the referees for their valuable comments. The research of Bing Li and Zhang-Nan Hu is supported by  NSFC 11671151, 11871227 and Guangdong Natural Science Foundation 2018B0303110005.
Wen-Chiao Cheng is partially supported by NSC Grant (109-2115-M-034-001).

 { }


\begin{thebibliography}{99}


 \bibitem{DPRZ}
A. Dembo, Y. Peres, J. Rosen and O. Zeitouni,
Thick points for spatial Brownian motion: multifractal analysis of occupation measure,
 {\em Ann. Probab.} 28 (2000), no. 1, 1--35. 
 
\bibitem{Dvor56}
A. Dvoretzky,
On covering a circle by randomly placed arcs,
 {\em Proc. Nat. Acad. Sci. USA} 42 (1956), 199--203.
  
 \bibitem{fanf}
A.-H. Fan,
Sur quelques processus de naissance et de mort, 
{\em C. R. Acad. Sci. Paris Sér. I Math.} 310 (1990), no. 6, 441–444. 

 \bibitem{fan}
A.-H. Fan,
Limsup deviations on trees,
{\em Anal. Theory Appl.} 20 (2004), no. 2, 113–148. 

\bibitem{FanScT13}
A.-H. Fan, J. Schmeling and S. Troubetzkoy,
A multifractal mass transference principle for Gibbs measures with applications to dynamical Diophantine approximation,
{\em Proc. London Math. Soc.} (3) 107 (2013), no. 5, 1173--1219.
 
 \bibitem{FWW}
D.-J. Feng, Z.-Y. Wen and J. Wu,
Some dimensional results for homogeneous Moran sets,
{\em Sci. China Ser. A} 40 (1997), no. 5, 475--482. 
 
 \bibitem{How}
J. D. Howroyd,
On dimension and on the existence of sets of finite positive Hausdorff measure,
{\em Proc. London Math. Soc.} (3) 70 (1995), no. 3, 581--604.
 
 \bibitem{HLX}
 Z.-N. Hu, B. Li and Y.-M. Xiao,
On the Intersection of Dynamical Covering Sets with Fractals, arXiv:2105.05410.
 

\bibitem{Kahane}
J.-P. Kahane,
{\em  Some random series of functions,}
 Second edition. Cambridge Studies in Advanced Mathematics, 5 (Cambridge University Press, Cambridge, 1985).
 
 \bibitem{KPX}
 D. Khoshnevisan, Y. Peres and Y.-M. Xiao,
Limsup random fractals,
{\em Electron. J. Probab.} 5 (2000), no. 5, 24 pp.

\bibitem{LSX}
B. Li, N.-R. Shieh and Y.-M. Xiao,
Hitting probabilities of the random covering sets,
 In: {\em Fractal Geometry and Dynamical Systems in Pure and Applied Mathematics. {II}. Fractals in
 Applied mathematics} 307--323. {\em Contemp. Math.} 601. Amer. Math. Soc., Providence, RI, 2013.

\bibitem{LS}
B. Li and V. Suomala,
A note on the hitting probabilities of random covering sets,
{\em Ann. Acad. Sci. Fenn. Math.} 39 (2014), no. 2, 625--633. 

\bibitem{lyo}
 R. Lyons, 
 Random walks and percolation on trees,
 {\em Ann. Probab.} 18 (1990), no. 3, 931–958.

\bibitem{OT}
S. Orey and S. J. Taylor,
How often on a Brownian path does the law of iterated logarithm fail? 
{\em Proc. London Math. Soc.} (3) 28 (1974), 174--192. 

\bibitem{Tri}
C. Tricot,
Two definitions of fractional dimension,
{\em Math. Proc. Cambridge Philos. Soc.} 91 (1982), no. 1, 57--74. 

\bibitem{Wang}
B.-W. Wang, J. Wu and J. Xu,
Dynamical covering problems on the triadic Cantor set,
{\em C. R. Math. Acad. Sci. Paris} 355 (2017), no. 7, 738--743.

\bibitem{Yuan}
Z.-H. Yuan,
Shrinking target problem for random IFS,
{\em Fractals} 26 (2018), no. 6, 1850085, 16 pp.

\bibitem{climb}
Y. Peres, 
{\em Probability on trees: an introductory climb}, 
Lectures on probability theory and statistics (Saint-Flour, 1997), 193–280, Lecture Notes in Math., 1717, Springer, Berlin, 1999.

\bibitem{Zhang}
L. Zhang,
Hausdorff dimension of limsup random fractals, {\em Electron. J. Probab.} 18 (2013), no. 39, 26 pp.

\end{thebibliography}
\end{document}